\numberwithin{equation}{section}
\newtheorem{assumption}{Assumption}
\def\tbar{{|\hspace{-.02in}|\hspace{-.02in}|}}
\def\cT{\mathcal{T}}
\def\RM{{\operatorname{RM}}}
\newtheorem{exmp}{Example}[section]
\title{A Generalized Weak Galerkin Method for Linear Elasticity with Nonpolynomial Approximations}
\author{
Junping Wang
\thanks{Directorate for Mathematical and Physical Sciences, 
           U.S. National Science Foundation, 
           Alexandria, VA 22314, USA
           (jwang@nsf.gov)}
\and
Yue Wang
\thanks{Petrochina (Beijing) Digital Intelligence Research Institute Co., Ltd., Beijing, China (wendy\_680@qq.com).}}
\begin{document}

\maketitle
\begin{abstract}
This paper presents a generalized weak Galerkin (gWG) finite element method for linear elasticity problems on general polygonal and polyhedral meshes. The proposed framework is flexible and efficient, allowing for the use of nonpolynomial approximating functions. The generalized weak differential operators are defined as an element-level correction of the classical differential operators accounting for boundary discontinuities. This construction reduces computational cost and provides greater flexibility than standard weak Galerkin formulations. The gWG framework naturally accommodates arbitrary finite-dimensional approximation spaces, including nonpolynomial activation-based spaces with randomly selected parameters. Error equations and error estimates are established for the proposed method. Numerical experiments demonstrate that the method is locking-free, robust with respect to mesh geometry, and effective on general polygonal and polyhedral partitions. In particular, activation-based interior approximation spaces exhibit convergence behavior comparable to that of classical polynomial spaces.
\end{abstract}

\begin{keywords}
weak Galerkin method, linear elasticity, nonpolynomial approximation, finite element method
\end{keywords}

\begin{MSCcodes}
74B05, 65N30, 65N15, 65N12, 68W20
\end{MSCcodes}


\section{Introduction}
In this paper, we consider the linear elasticity problem
\begin{subequations}\label{eq:Problem}
\begin{align}
\label{le_1}-\nabla \cdot \sigma(\bm{u}) &=\bm{f}, & & \text { in } \Omega,\\
\label{le_2}\bm{u} &=\bm{g}, & & \text { on } \partial \Omega,
\end{align}
\end{subequations}
where $\Omega\subset\mathbb{R}^d\,(d=2,3)$ is the polygonal/polyhedral domain, $\bm{u}$ denotes the displacement field, $ \sigma(\bm{u})$ is the stress tensor, $\bm{f}$ is the body force, and $\bm{g}$ is the prescribed boundary displacement. The divergence $\nabla \cdot \sigma(\bm{u})$ is taken tow-wise. For homogeneous isotropic linear elasticity,
$$
\sigma(\bm{u})=2\mu\varepsilon(\bm{u})+\lambda\nabla\cdot\bm{u}\,{\rm I},
$$
where $\varepsilon(\bm{u})=\frac{1}{2}(\nabla \bm{u}+\nabla \bm{u}^\mathrm{T})$ is the strain tensor. $\lambda$ and $\mu$ are the Lam'{e} constants satisfying $0 < \alpha_1 \leq \lambda \leq \alpha_2 < \infty$ and $0 < \beta_1 \leq \mu \leq \beta_2 < \infty$, and ${\rm I}$ is the identity matrix.

\smallskip
Linear elasticity models arise in a wide variety of engineering and scientific applications. Numerous numerical methods have been developed to address issues of stability, robustness, and locking, especially for nearly incompressible media. For instance, 
in \cite{fuentes_coupled_2017}, broken formulations were shown to naturally dispose of many complications arising with the compatibility and well-posedness of coupled variational formulations. Nonconforming virtual element method for linear elasticity problems was introduced in \cite{zhang_nonconforming_2019} to derive a locking-free formulation on polygonal or polyhedral meshes, although the stability of the bilinear form was weaker than the usual one presented in the literature when $\lambda$ becomes large. A mixed discontinuous Galerkin method with strongly imposed symmetry was introduced in \cite{wang_mixed_2020} by Wang, Wu and Xu, where a priori error estimates were established using a mesh-dependent norm. A data-driven Physics-Informed Neural Network framework \cite{roy2023deep} was presented by Arunabha et al. for linear elasticity problems, featuring a multi-objective loss function that incorporates governing PDEs, boundary conditions, and constitutive relations. The proposed methodology employs multiple densely connected
independent artificial neural networks to obtain accurate solutions. Later, a mesh-based PINN approach \cite{wang2024m} was introduced by Wang et al. that integrated the concept of domain discretization from the finite element method to guide and constrain the neural network's optimization process, thereby enhancing convergence and stability. The advantages of the method include reduced reliance on complex gradient computations and automatic differentiation, and handled problems with unknown boundary conditions validated through solid mechanics simulations.

\smallskip
The weak Galerkin finite element method (WG-FEM), introduced by Wang and Ye \cite{wang_weak_2013}, replaces classical differential operators by discrete weak operators. Several versions of WG-FEM have been developed for linear elasticity. Wang et al.~\cite{wang_locking-free_2016} introduced a locking-free formulation under the requirement that boundary unknowns include the rigid motion space. Lowest-order WG schemes on rectangular and brick meshes were studied in \cite{harper_lowest-order_2019,yi_lowest-order_2019}, based on the combination ${[P_0(T)]^d,[P_0(\partial T)]^d,RT^d_0(T),P_0(T)}$. Later, a simplified WG formulation with $P_1/P_0$ approximations and tangential stabilization was proposed in \cite{liu_locking-free_2022}, although evaluation of the stabilizer is relatively complex. Recently, a penalty-free any-order WG method on convex quadrilateral meshes was developed in \cite{wang_penalty-free_2023}, and a lowest-order WG-FEM on convex polygonal grids was presented in \cite{wang_lowest-order_2024}. A stabilizer-free WG-FEM \cite{wang2024simplifiedweakgalerkinmethods} was introduced by Wang and Zhang that utilized bubble functions on general polygonal/polyhedral meshes (without convexity constraints), which was symmetric, positive definite, and achieved optimal convergence rates.

\smallskip
In this work, we propose the generalized weak Galerkin (gWG) method for linear elasticity on general polygonal and polyhedral meshes with polynomial or non-polynomial approximations. The gWG framework has been successfully applied to elliptic equations \cite{li_generalized_2024}, Oseen equations \cite{qi_generalized_2024}, Stokes equations \cite{qi_generalized_2023}, and biharmonic problems \cite{li_generalized_2023}. The key feature of gWG is that the generalized weak gradient/divergence is defined as the sum of the classical gradient/divergence and the solution of a simple local correction problem corresponding to the discontinuity along element boundaries. As a result, the computational cost of evaluating weak differential operators is significantly reduced compared to standard WG-FEM, and moreover, the method admits great flexibility in selecting stabilizer parameters $(\gamma,R_b)$ and local approximation spaces. We derive the corresponding error equations and error estimates for arbitrary finite-dimensional spaces. In particular, our numerical results demonstrate that activation-based approximation spaces with random parameters can be used effectively inside elements, achieving convergence rates comparable to classical polynomial spaces in many cases, and preserving uniform locking-free performance. Compared with \cite{wang_locking-free_2016}, the use of non-polynomial function spaces resolves the difficulty of well-posedness when taking $V^b(T)=[P_0(T)]^d$ on element boundaries.

\smallskip
The remainder of this paper is organized as follows. Section \ref{gWG section} introduces the generalized weak operators and the gWG algorithm, and establishes well-posedness. Section \ref{convergence} provides the error analysis. Section \ref{EX} presents numerical experiments illustrating the convergence behavior and locking-free performance for various choices of approximation spaces, stabilizer parameters, and mesh types.

\smallskip
For any polygonal/polyhedral subdomain $D\subseteq\Omega$, let $H^{s}(D)$ be the standard Sobolev space with $s\ge0$, with inner product $(\cdot,\cdot)_{s,D}$, norm $|\cdot|_{s,D}$, and semi-norm $|\cdot|_{s,D}$. When $s=0$, the subscript $s$ is omitted.

\section{Numerical Scheme}\label{gWG section}
\smallskip
Let $\mathcal{T}_{h}$ be a polygonal/polyhedral partition of the domain $\Omega$, and denote by $\mathcal{E}_h$ the set of all edges/faces in $\mathcal{T}_{h}$. For each element $T \in \mathcal{T}_{h}$, denote by $h_T$ its diameter, and define the mesh size $h=\max_{T \in \mathcal{T}_{h}} h_T$. On each $T \in \mathcal{T}_{h}$, we introduce the local weak function space 
\begin{equation}\label{EQ:local-space}
V(T)=\{\bm{v}=\{\bm{v}_0,\bm{v}_b\}: \bm{v}_0\in V^0(T),\ \bm{v}_b\in V^b(e),~e\subset \partial T\},
\end{equation}
where $V^0(T)$ and $V^b(e)$ are finite dimensional spaces possessing standard approximation properties.

The global weak finite element space $V_h$ is obtained by assembling $V(T)$ over all $T\in \mathcal{T}_h$, subject to the requirement that each $\bm{v}_b$ attains a unique value on every interior edge/face $e \in \mathcal{E}_h$. More precisely,
\begin{equation*}
V_h=\{\bm{v}=\{\bm{v}_0,\bm{v}_b\}: \bm{v}|_T\in V(T),\ \bm{v}_b|_{\partial T\cap e}=\bm{v}_b|_{\partial T^*\cap e}, \ T \in{\cT}_h\},
\end{equation*}
where $T$ and $T^*$ denote two neighboring elements sharing the common interface $e$ (see Figure \ref{Fig_WG}). The subspace with zero boundary value is defined by 
$$
V_h^0=\{\bm{v}: \ \bm{v}\in V_h,~\bm{v}_b=\bm{0}~\text{on}~\partial \Omega\}.
$$
\begin{figure}[!ht]{}
  \centering  \centerline{\includegraphics[scale=0.09]{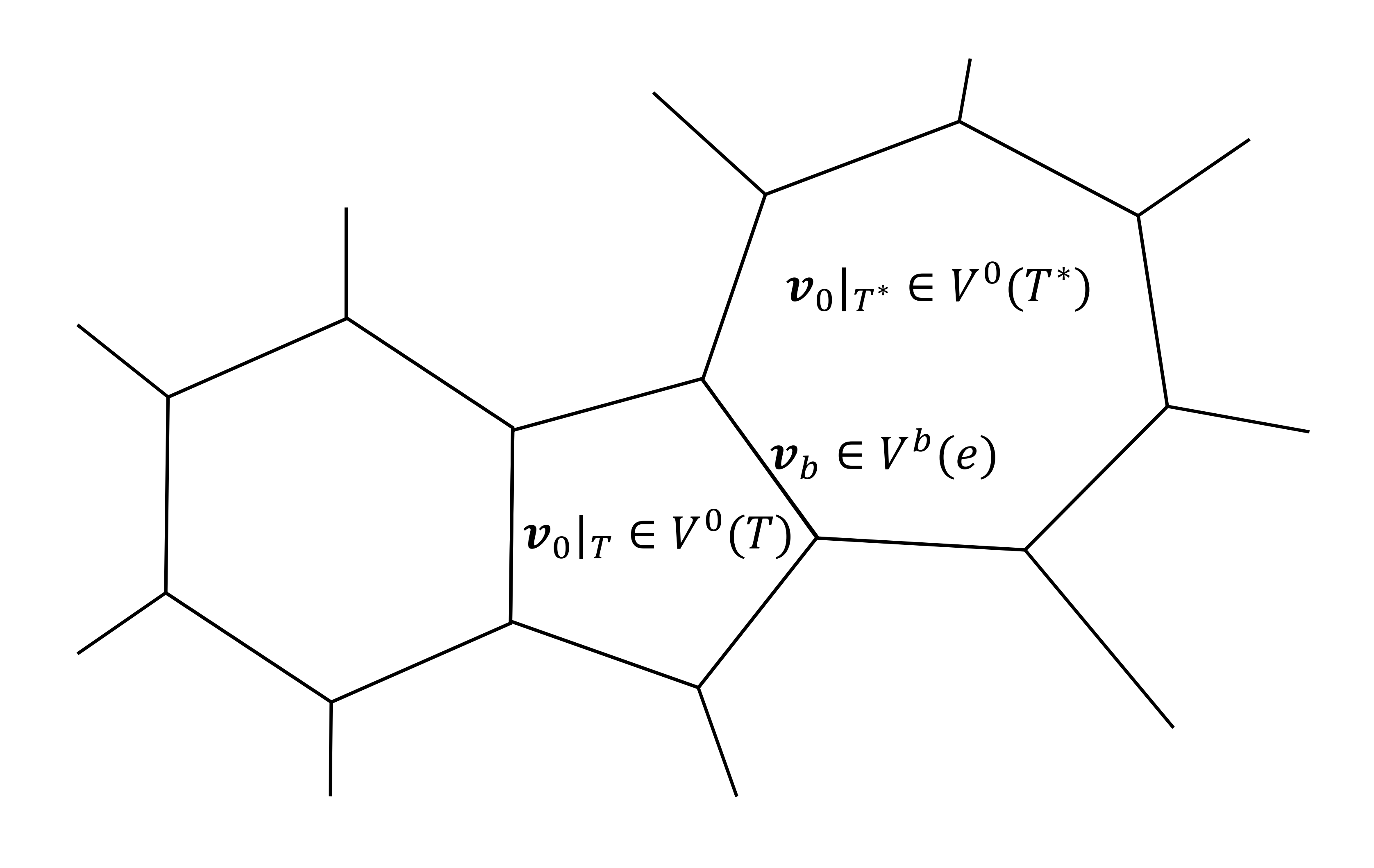}}
  \caption{Two neighboring elements $T$ and $T^*$ sharing an interface $e=\partial T\cap\partial T^\in\mathcal{E}_h$, where $\bm{v}_b$ is single-valued.}
\label{Fig_WG}
\end{figure}

For each $T\in\mathcal{T}_h$, let $\RM(T)=\{\bm{a}+\eta\bm{x}: \bm{a}\in\mathbb{R}^d, \eta\in so(d)\}$ be the space of rigid motions on $T$, where $so(d)$ is the space of $d\times d$ skew-symmetric matrices. The restriction of $\RM(T)$ to a face $e\subset\partial T$ represents the space of rigid motion on $e$, and is denoted by $\RM(e)$.

We introduce a linear operator $R_b: [L^2(\mathcal{E}_h)]^{d} \to [L^2(\mathcal{E}_h)]^{d}$ that satisfies the following two assumptions.

\begin{assumption}[Invariance under Rigid Motion]\label{as:rigidmotioninvariance}
 $R_b$ preserves any displacement belonging to the restriction of $\RM(T)\cap V^0(T)$ on $e\subset\partial T$. That is, $R_b(\phi|_{e})=\phi|_e$ for all $\phi\in \RM(T)\cap V^0(T)$ and $e\in\mathcal{E}_h$.
 \end{assumption}

\begin{assumption}[Injectivity]\label{as:one2one}
$R_b$ is one-to-one on each space $V^b(e)$, $e\in\mathcal{E}_h$.
 \end{assumption}

\smallskip
\begin{definition}\label{def gweak grad}
(Generalized Weak Gradient)
Let $G_1(T)\subset [L^2(T)]^{d\times d}$ be given. For $\bm{v}\in V(T)$, the generalized weak gradient on $T$, denoted as ${\nabla}_{g,T}\bm{v}$, is the unique function in $\nabla V^0(T)+G_1(T)$ such that 
\begin{equation}\label{weak gradient-1}
\begin{split}
{\nabla}_{g,T}\bm{v}={\nabla}\bm{v}_0+{\delta}_{g_1,T} \bm{v},
\end{split}
\end{equation}
where ${\nabla}\bm{v}_0$ is the classical gradient of $\bm{v}_0$ and ${\delta}_{g_1,T} \bm{v}\in G_1(T)$ satisfies
\begin{equation}\label{weak gradient-2}
({\delta}_{g_1,T} \bm{v},\boldsymbol{\psi})_T=\langle R_b(\bm{v}_b-\bm{v}_0),\boldsymbol{\psi} \mathbf{n}\rangle_{\partial T},\quad \forall\boldsymbol{\psi}\in G_1(T),
\end{equation}
with $\mathbf{n}$ denoting the outward normal of $\partial T$.
\end{definition}

\smallskip
\begin{definition} (Generalized Weak Divergence)
Let $G_2(T)\subset L^2(T)$ be given. For $\bm{v}\in V(T)$, the generalized weak divergence is defined as the unique function ${\nabla}_{g,T}\cdot\bm{v} \in \nabla\cdot V^0(T) + G_2(T)$ satisfying
\begin{equation}\label{weak divergence-1}
\begin{split}
{\nabla}_{g,T}\cdot\bm{v}={\nabla}\cdot\bm{v}_0+{\delta}_{g_2,T} \bm{v},
\end{split}
\end{equation}
where ${\delta}_{g_2,T} \bm{v}\in G_2(T)$ solves
\begin{equation}\label{weak divergence-2}
({\delta}_{g_2,T} \bm{v},\phi)_T=\langle R_b(\bm{v}_b-\bm{v}_0),\phi \mathbf{n}\rangle_{\partial T},\quad \forall\phi\in G_2(T).
\end{equation}
\end{definition}

\smallskip
For the specific choice $\bm{v}_0\in[P_k(T)]^d$, $\bm{v}_b\in[P_{k-1}(e)]^d$, $G_1(T)=[P_{k-1}(T)]^{d\times d}$, $G_2(T)= P_{k-1}(T)$, and $R_b=Q_b$ (the $L^2$ projection operator) , the generalized weak gradient and divergence recover the classical weak formulations of \cite{wang_weak_2013,wang_locking-free_2016}.

\smallskip
For convenience, we adopt the notations:
$$
\begin{aligned}
(\cdot,\cdot)_{\cT_h}=&\sum_{T\in \cT_h}(\cdot,\cdot)_T,\,\,&\langle \cdot,\cdot \rangle_{\partial \cT_h}=&\sum_{T\in \cT_h}\langle \cdot,\cdot \rangle_{\partial T},\\
\|\cdot\|_{\cT_h}=&\sum_{T\in \cT_h}\|\cdot\|_T,\,\,&\|\cdot\|_{\partial \cT_h}=&\sum_{T\in \cT_h}\|\cdot\|_{\partial T},\\
({\nabla}_{g}\bm{v})|_T =& \ {\nabla}_{g,T}\bm{v},\qquad &({\nabla}_{g}\cdot\bm{v})|_T =&\ {\nabla}_{g,T}\cdot\bm{v}.
\end{aligned}
$$
\smallskip
\begin{algorithm}
\caption{gWG Algorithm}
\label{linear elasticity algorithm}
Find $\bm{u}_h\in V_h$ such that $\bm{u}_h|_{\partial \Omega}=Q_b \bm{g}$ and 
\begin{equation}\label{gWG_algorithm}
a(\bm{u}_h,\bm{v})+s(\bm{u}_h,\bm{v})=(\bm{f},\bm{v}_0)\quad \forall \ \bm{v}\in V_h^0,
\end{equation}
where 
\begin{equation*}
\begin{split}
a(\bm{u}_h,\bm{v})=&(2\mu \varepsilon_g(\bm{u}_h), \varepsilon_g (\bm{v}))_{\cT_h}+(\lambda \nabla_g\cdot \bm{u}_h, \nabla_g \cdot \bm{v})_{\cT_h},\\
s(\bm{u}_h,\bm{v}) =& \sum_{T\in \cT_h}\rho h_T^{\gamma}\langle R_b(\bm{u}_0-\bm{u}_b),R_b(\bm{v}_0-\bm{v}_b) \rangle_{\partial T},\\
\varepsilon_g(\bm{v})=&(\nabla_g \bm{v}+\nabla_g \bm{v}^\mathrm{T})/2.
\end{split}
\end{equation*}
Here $\rho>0$ and $\gamma$ are fixed parameters,  typically chosen as $\rho=1$ and $\gamma=-1$. The operator $Q_b$ denotes the standard $L^2$ projection onto $V^b(e)$ for each $e\in\mathcal{E}_h$.
\end{algorithm}

The generalized weak Galerkin formulation above is a discrete weak formulation driven entirely by weak derivatives, admitting general (possibly non-polynomial) local approximation spaces. In this framework, the classical strong gradient and divergence operators are replaced by their generalized weak counterparts defined elementwise on user-selected approximation spaces. The bilinear form $a(\cdot,\cdot)$ represents the discrete linear elasticity operator, where weak gradients, when symmetrized, yields a consistent approximation of the strain tensor. The stabilization term $s(\cdot,\cdot)$ imposes control on the jump of the weak finite element unknowns across inter-element boundaries. In particular, it enforces a weak continuity mechanism for the interior–boundary variable pairs $(\bm{v}_0,\bm{v}_b)$ so that the discrete space behaves as a stable approximation to the continuous $H^1$ displacement field. Overall, the gWG scheme maintains the flexibility of discontinuous finite element approximations, including those built from non-polynomial functions, while still achieving a robust and physically consistent discretization of linear elasticity.


\begin{lemma}\label{second korn inequ}
\cite{wang_locking-free_2016} (The second Korn's inequality) Assume $\Omega$ is connected, bounded, and has a Lipschitz boundary. Let $\Gamma_{1} \subset \partial \Omega$ be a nontrivial portion of the boundary $\partial \Omega$ with dimension $d-1$. For any $1 \leq p\leqslant p^*$, there exists a constant $C$ such that
$$
\|\bm{v}\|_{1} \leq C\left(\|\varepsilon(\bm{v})\|_{0}+\|\bm{v}\|_{L^{p}\left(\Gamma_{1}\right)}\right),\,\, \forall \bm{v} \in\left[H^{1}(\Omega)\right]^{d},
$$
where $p^*=\infty$ if $d=2$ and $p^*=4$ if $d=3$.
\end{lemma}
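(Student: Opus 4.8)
The plan is to establish the inequality by a compactness (Peetre--Tartar) argument resting on two standard ingredients: the classical first Korn inequality and a sharp trace embedding whose exponent range is precisely what dictates the admissible values of $p$. First I would record the first Korn inequality, namely that there is a constant $C_0$ with $\|\bm{v}\|_{1}\le C_0(\|\bm{v}\|_{0}+\|\varepsilon(\bm{v})\|_{0})$ for all $\bm{v}\in[H^{1}(\Omega)]^{d}$ on any bounded Lipschitz domain, together with the kernel characterization that $\varepsilon(\bm{v})=\bm{0}$ if and only if $\bm{v}=\bm{a}+\eta\bm{x}\in\RM(\Omega)$ is a rigid motion.

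Next I would argue by contradiction. Suppose the claimed estimate fails for every constant; then there is a sequence $\{\bm{v}_n\}\subset[H^{1}(\Omega)]^{d}$ with $\|\bm{v}_n\|_{1}=1$ and $\|\varepsilon(\bm{v}_n)\|_{0}+\|\bm{v}_n\|_{L^{p}(\Gamma_1)}\to 0$. Being bounded in $[H^{1}(\Omega)]^{d}$, the sequence has, by the Rellich--Kondrachov theorem, a subsequence (not relabeled) converging weakly in $[H^{1}(\Omega)]^{d}$ and strongly in $[L^{2}(\Omega)]^{d}$ to some $\bm{v}$. Applying the first Korn inequality to the differences $\bm{v}_n-\bm{v}_m$ shows that $\{\bm{v}_n\}$ is Cauchy in $[H^{1}(\Omega)]^{d}$, because $\|\bm{v}_n-\bm{v}_m\|_{0}\to0$ by the $L^{2}$ convergence and $\|\varepsilon(\bm{v}_n)-\varepsilon(\bm{v}_m)\|_{0}\to0$ by hypothesis. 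Hence $\bm{v}_n\to\bm{v}$ strongly in $[H^{1}(\Omega)]^{d}$ and $\|\bm{v}\|_{1}=1$.

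Passing to the limit in $\|\varepsilon(\bm{v}_n)\|_{0}\to0$ gives $\varepsilon(\bm{v})=\bm{0}$, so $\bm{v}\in\RM(\Omega)$. The decisive step is the boundary term. The trace operator maps $[H^{1}(\Omega)]^{d}$ continuously into $[L^{p}(\Gamma_1)]^{d}$ exactly in the range $1\le p\le p^*$, where $p^*$ is the Sobolev trace exponent governed by $\tfrac{1}{p^*}=\tfrac12-\tfrac{1/2}{d-1}$, giving $p^*=\infty$ for $d=2$ and $p^*=4$ for $d=3$. Strong $H^{1}$ convergence then yields $\bm{v}_n\to\bm{v}$ in $[L^{p}(\Gamma_1)]^{d}$, and the hypothesis forces the trace $\bm{v}|_{\Gamma_1}=\bm{0}$. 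Finally I would note that a rigid motion vanishing on a nontrivial $(d-1)$-dimensional portion of the boundary must be identically zero: the zero set of a nonzero element of $\RM(\Omega)$ lies in an affine subspace of dimension at most $d-2$ (a single point when $d=2$, a line when $d=3$), which cannot contain $\Gamma_1$. Thus $\bm{v}\equiv\bm{0}$, contradicting $\|\bm{v}\|_{1}=1$, and the inequality follows.

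I expect the main obstacle to be the sharp identification of the trace exponent $p^*$, since this is precisely what restricts the admissible range of $p$ and separates the two- and three-dimensional cases; the remaining pieces—the first Korn inequality, the compact embedding, and the rigidity of rigid motions—are standard and assemble routinely into the contradiction scheme.
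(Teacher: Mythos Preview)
The paper does not supply its own proof of this lemma; it is quoted from \cite{wang_locking-free_2016} as a known result and used only as a black box in the well-posedness argument for Theorem~\ref{well posedness}. Your compactness (Peetre--Tartar) proof is the standard route to such Korn-type inequalities and is correct.

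One small refinement concerns the endpoint $p=p^*=\infty$ when $d=2$: the embedding $H^{1/2}(\Gamma_1)\hookrightarrow L^\infty(\Gamma_1)$ fails at the critical exponent, so your passage to the limit via continuity of the trace into $L^{p}(\Gamma_1)$ does not directly cover that case. The fix is immediate and does not require the sharp trace exponent at all: since $\Gamma_1$ has finite measure, $\|\bm{v}_n\|_{L^{p}(\Gamma_1)}\to 0$ forces $\|\bm{v}_n\|_{L^{1}(\Gamma_1)}\to 0$ for every $p\ge 1$, and the trace $H^{1}(\Omega)\to L^{1}(\Gamma_1)$ is certainly continuous, whence $\bm{v}|_{\Gamma_1}=\bm{0}$. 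With that adjustment your argument handles the full stated range uniformly; in fact it shows the inequality holds for all $p\ge 1$, the upper bound $p^*$ being merely the range in which the right-hand side is guaranteed finite for every $\bm{v}\in[H^{1}(\Omega)]^{d}$.
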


\begin{theorem}\label{well posedness}
The generalized weak Galerkin scheme \eqref{gWG_algorithm} admits a unique solution.
\end{theorem}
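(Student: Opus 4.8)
The plan is to exploit the fact that \eqref{gWG_algorithm} is a square linear system posed on the finite-dimensional space $V_h^0$, so that existence of a solution is equivalent to uniqueness. It therefore suffices to show that the homogeneous problem---$\bm{f}=\bm{0}$ and $\bm{g}=\bm{0}$, so that $\bm{u}_h\in V_h^0$---admits only the trivial solution. Testing \eqref{gWG_algorithm} against $\bm{v}=\bm{u}_h$ yields
\begin{equation*}
2\mu\|\varepsilon_g(\bm{u}_h)\|_{\cT_h}^2+\lambda\|\nabla_g\cdot\bm{u}_h\|_{\cT_h}^2+\sum_{T\in\cT_h}\rho\, h_T^{\gamma}\|R_b(\bm{u}_0-\bm{u}_b)\|_{\partial T}^2=0.
\end{equation*}
Since $\mu,\lambda,\rho>0$, each of the three nonnegative terms must vanish. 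In particular $R_b(\bm{u}_0-\bm{u}_b)=\bm{0}$ on every $\partial T$, and $\varepsilon_g(\bm{u}_h)=0$ on every $T$.

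First I would use the vanishing of the boundary residual to simplify the weak operators. Feeding $R_b(\bm{u}_0-\bm{u}_b)=\bm{0}$ into the defining relations \eqref{weak gradient-2} and \eqref{weak divergence-2} makes their right-hand sides zero, so the corrections satisfy $\delta_{g_1,T}\bm{u}_h=0$ and $\delta_{g_2,T}\bm{u}_h=0$; hence $\nabla_{g,T}\bm{u}_h=\nabla\bm{u}_0$ and $\varepsilon_g(\bm{u}_h)=\varepsilon(\bm{u}_0)$. The condition $\varepsilon_g(\bm{u}_h)=0$ then reads $\varepsilon(\bm{u}_0)=0$ on each $T$, i.e. $\bm{u}_0|_T\in\RM(T)\cap V^0(T)$ is a rigid motion on every element.

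Next I would promote this elementwise rigidity to a single global rigid motion. On an interior face $e=\partial T\cap\partial T^*$, single-valuedness of $\bm{u}_b$ together with $R_b(\bm{u}_0-\bm{u}_b)=\bm{0}$ gives $R_b(\bm{u}_0^T|_e)=R_b(\bm{u}_b|_e)=R_b(\bm{u}_0^{T^*}|_e)$; since both traces come from rigid motions, Assumption \ref{as:rigidmotioninvariance} lets me strip off $R_b$ and conclude $\bm{u}_0^T|_e=\bm{u}_0^{T^*}|_e$. Because two rigid motions agreeing on a $(d-1)$-dimensional face must coincide as affine maps (their difference is affine with skew-symmetric gradient, and a skew-symmetric matrix cannot vanish on a subspace of dimension $d-1$ without being zero, for $d=2,3$), the function $\bm{u}_0$ is continuous across every interior face and, by connectedness of $\Omega$, equals one global rigid motion.

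Finally I would annihilate this rigid motion using the boundary condition and injectivity. On a boundary face $e\subset\partial\Omega$ we have $\bm{u}_b=\bm{0}$, so $R_b(\bm{u}_0|_e)=\bm{0}$, and Assumption \ref{as:rigidmotioninvariance} again gives $\bm{u}_0|_e=\bm{0}$; a global rigid motion vanishing on a face of positive $(d-1)$-measure is identically zero, so $\bm{u}_0\equiv\bm{0}$. Then $R_b(\bm{u}_b|_e)=R_b(\bm{u}_0|_e)=\bm{0}$ on every $e$, and Assumption \ref{as:one2one} (injectivity of $R_b$ on $V^b(e)$) forces $\bm{u}_b=\bm{0}$; hence $\bm{u}_h=\bm{0}$, which proves uniqueness and therefore well-posedness. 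The step I expect to be the main obstacle is the passage from elementwise to global rigidity: it is precisely here that both structural hypotheses on $R_b$ are indispensable, and one must argue carefully that matching traces of rigid motions on lower-dimensional faces produces genuine continuity rather than mere agreement under $R_b$. Alternatively, one could route the same conclusion through a discrete coercivity estimate built on the second Korn inequality of Lemma \ref{second korn inequ}, but the direct kernel argument above avoids any norm-equivalence machinery.
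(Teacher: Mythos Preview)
Your proof is correct and tracks the paper's argument through step 5: reduce to the homogeneous problem, test against the solution, extract $\varepsilon_g(\bm{u}_h)=0$ and $R_b(\bm{u}_0-\bm{u}_b)=0$, collapse the weak gradient to the classical one, deduce $\bm{u}_0|_T\in\RM(T)\cap V^0(T)$, and then use Assumption~\ref{as:rigidmotioninvariance} to obtain continuity of $\bm{u}_0$ across interior faces. The divergence is only in the final annihilation step. The paper, having a continuous piecewise-$P_1$ field with vanishing strain and vanishing boundary trace (the latter following from $\bm{w}_0|_e=R_b(\bm{w}_b)=0$ on $\partial\Omega$), invokes the second Korn inequality of Lemma~\ref{second korn inequ} to conclude $\bm{w}_0=0$. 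You instead argue directly that two rigid motions agreeing on a $(d-1)$-face must coincide as affine maps (via the even-rank property of skew-symmetric matrices), so $\bm{u}_0$ is a single global rigid motion, which then vanishes because its trace on any boundary face is zero. Your route is more elementary and self-contained---it never leaves linear algebra---whereas the paper's route is shorter on the page because Korn absorbs the rigidity-propagation and boundary arguments into a single analytic estimate. Both are valid; your closing remark correctly identifies the Korn-based alternative as the paper's actual choice.
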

\begin{proof}
Since the number of unknowns equals the number of equations, it suffices to show uniqueness. Let $\bm{u}_h^{(i)}=\{\bm{u}_0^{(i)},\bm{u}_b^{(i)}\}\in V_h$ $(i=1,2)$ be two solutions of \eqref{gWG_algorithm}. Then $\bm{u}_b^{(i)}=Q_b \bm{g}$ and
$$
a(\bm{u}_h^{(i)},\bm{v})+s(\bm{u}_h^{(i)},\bm{v})=(\bm{f},\bm{v}_0),\,\,\,\forall \bm{v}\in V_h^0.
$$ 
Set $\bm{w}=\bm{u}_h^{(1)}-\bm{u}_h^{(2)}$. Then $\bm{w}\in V_h^{0}$ and satisfies
$$
a(\bm{w},\bm{v})+s(\bm{w},\bm{v})=0,\quad \forall\  \bm{v}\in V_h^0.
$$
Taking $\bm{v}=\bm{w}$ gives
$$
(2\mu \varepsilon_g(\bm{w}), \varepsilon_g (\bm{w}))_{\cT_h}+(\lambda \nabla_g\cdot \bm{w}, \nabla_g \cdot \bm{w})_{\cT_h}+\sum_{T\in \cT_h}\rho h_T^{\gamma}\langle R_b(\bm{w}_0-\bm{w}_b),R_b(\bm{w}_0-\bm{w}_b) \rangle_{\partial T}=0,
$$
which implies 
\begin{equation}\label{solution_uni_1}
\varepsilon_g(\bm{w})=\bm{0},
\end{equation}
and
\begin{equation}\label{solution_uni_2}
R_b(\bm{w}_0-\bm{w}_b)=\bm{0}.
\end{equation}

From Definition \ref{def gweak grad}, \eqref{solution_uni_2} and \eqref{solution_uni_1} we obtain ${\nabla}_{g}\bm{w}={\nabla}\bm{w}_0$, and hence
$$
\varepsilon(\bm{w}_0)=\varepsilon_g(\bm{w})=\bm{0}.
$$ 
Thus $\bm{w}_0\in \RM(T)\cap V^0(T)\subset [P_1(T)]^d$ for each $T$. By Assumption \ref{as:rigidmotioninvariance} and \eqref{solution_uni_2},
$$
\bm{w}_0|_e = R_b(\bm{w}_0|_e)= R_b(\bm{w}_b),
$$
which yields the continuity of $\bm{w}_0$ in $\Omega$. 

Since $\bm{w}\in V_h^0$, Lemma \ref{second korn inequ} implies $\bm{w}_0=\bm{0}$. Then \eqref{solution_uni_2} yields $R_b( \bm{w}_b ) =0$. By the injective Assumption \ref{as:one2one}, we have $\bm{w}_b=0$. Therefore, $\bm{u}_h^{(1)}=\bm{u}_h^{(2)}$, which completes the proof of the theorem.

\end{proof}

\section{Error Estimates}\label{convergence}
In this section, we derive the error equation and establish error estimates for the numerical solutions arising from the gWG scheme \eqref{gWG_algorithm}. 

For each element $T$, let $Q_h = \{Q_0,Q_b\}$ denote the $L^2$ projection operator from $[L^2(T)]^d$ onto $V(T)$. Similarly, let $\mathbb{Q}_h$ and $\mathcal{Q}_h$ denote the $L^2$ projections onto $G_1(T)$ and $G_2(T)$, respectively. We denote the identity operator by $I$, and define the error 
$$
\bm{e}_h:=Q_h \bm{u}-\bm{u}_h.
$$

\begin{lemma}
Let $\Phi\in [H^1(T)]^d$. Then for all  $\boldsymbol{\psi}\in G_1(T)$, the following identity holds:
\begin{equation}\label{EQ:Qh-property-epsilon}
\begin{split}
(\varepsilon_g(Q_h \Phi),\boldsymbol{\psi})_T = & (\varepsilon(\Phi),\boldsymbol{\psi})_T + (\varepsilon(Q_0\Phi-\Phi), \boldsymbol{\psi})_T\\
&+\frac12 \left\langle R_b(Q_b\Phi-\Phi),(\boldsymbol{\psi}+\boldsymbol{\psi}^{\mathrm{T}}) \mathbf{n}\right\rangle_{\partial T}\\
& + \frac12\left\langle R_b(\Phi-Q_0\Phi),(\boldsymbol{\psi}+\boldsymbol{\psi}^{\mathrm{T}})\mathbf{n}\right\rangle_{\partial T}.
\end{split}
\end{equation}
Moreover, for all $\phi\in G_2(T)$, it holds that
\begin{equation}\label{EQ:Qh-property-wd}
\begin{split}
({\nabla}_g \cdot Q_h\Phi,\phi)_T = & ({\nabla}\cdot\Phi,\phi)_T + (\nabla\cdot(Q_0\Phi-\Phi), \phi)_T\\
&+\langle R_b(Q_b\Phi-\Phi),\phi \mathbf{n}\rangle_{\partial T} + \langle R_b(\Phi-Q_0\Phi),\phi \mathbf{n}\rangle_{\partial T}.
\end{split}
\end{equation}
\end{lemma}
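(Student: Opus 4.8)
The plan is to derive both identities directly from the definitions of the generalized weak divergence and gradient evaluated at $Q_h\Phi=\{Q_0\Phi,Q_b\Phi\}$. No integration by parts is needed: once the local corrections $\delta_{g_2,T}$ and $\delta_{g_1,T}$ are replaced by their defining relations \eqref{weak divergence-2} and \eqref{weak gradient-2}, the remaining work is linear-algebraic bookkeeping, namely splitting the interior term across $\Phi$ and $Q_0\Phi$ and splitting the boundary term across $\Phi$, $Q_0\Phi$, $Q_b\Phi$ using the linearity of $R_b$.

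I would establish the divergence identity \eqref{EQ:Qh-property-wd} first, as it is the simpler case. By \eqref{weak divergence-1}, the function $\nabla_g\!\cdot\!(Q_h\Phi)$ equals $\nabla\!\cdot\!(Q_0\Phi)+\delta_{g_2,T}(Q_h\Phi)$. Pairing with an arbitrary $\phi\in G_2(T)$ and invoking \eqref{weak divergence-2} with $\bm v_0=Q_0\Phi$, $\bm v_b=Q_b\Phi$ gives
\[
(\nabla_g\!\cdot\! Q_h\Phi,\phi)_T=(\nabla\!\cdot\! Q_0\Phi,\phi)_T+\langle R_b(Q_b\Phi-Q_0\Phi),\phi\,\mathbf n\rangle_{\partial T}.
\]
The stated form then follows from two linear splittings: $\nabla\!\cdot\! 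Q_0\Phi=\nabla\!\cdot\!\Phi+\nabla\!\cdot\!(Q_0\Phi-\Phi)$ in the interior, and $R_b(Q_b\Phi-Q_0\Phi)=R_b(Q_b\Phi-\Phi)+R_b(\Phi-Q_0\Phi)$ on $\partial T$.

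For the strain identity \eqref{EQ:Qh-property-epsilon} I would follow the same template, with the extra bookkeeping introduced by the symmetrization $\varepsilon_g=\tfrac12(\nabla_g+\nabla_g^{\mathrm T})$. Writing $\nabla_g(Q_h\Phi)=\nabla(Q_0\Phi)+\delta_{g_1,T}(Q_h\Phi)$ and taking symmetric parts yields
\[
\varepsilon_g(Q_h\Phi)=\varepsilon(Q_0\Phi)+\operatorname{sym}\big(\delta_{g_1,T}(Q_h\Phi)\big).
\]
Because $\operatorname{sym}$ is self-adjoint with respect to the Frobenius inner product, pairing the correction part with $\bm\psi\in G_1(T)$ replaces the test matrix by its symmetric part $\tfrac12(\bm\psi+\bm\psi^{\mathrm T})$; applying \eqref{weak gradient-2} then produces the boundary contribution $\tfrac12\langle R_b(Q_b\Phi-Q_0\Phi),(\bm\psi+\bm\psi^{\mathrm T})\mathbf n\rangle_{\partial T}$. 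The interior term $(\varepsilon(Q_0\Phi),\bm\psi)_T$ and this boundary term are finally split exactly as in the divergence case to reach \eqref{EQ:Qh-property-epsilon}.

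The one point needing care — the main obstacle — is that invoking \eqref{weak gradient-2} after symmetrization requires $\tfrac12(\bm\psi+\bm\psi^{\mathrm T})$ (equivalently $\bm\psi^{\mathrm T}$) to remain an admissible test function in $G_1(T)$. This is automatic for the natural choices, such as $G_1(T)=[P_{k-1}(T)]^{d\times d}$, which are closed under transposition; I would make this closure explicit (or restrict to such $G_1(T)$) so that the defining relation may legitimately be evaluated on the symmetrized test matrix. With that observation in place, both identities reduce to the routine linear manipulations described above.
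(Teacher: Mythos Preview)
Your proposal is correct and follows essentially the same route as the paper's proof: both unfold the definitions \eqref{weak gradient-1}--\eqref{weak gradient-2} and \eqref{weak divergence-1}--\eqref{weak divergence-2} at $Q_h\Phi$, then split the interior term via $Q_0\Phi=\Phi+(Q_0\Phi-\Phi)$ and the boundary term via $Q_b\Phi-Q_0\Phi=(Q_b\Phi-\Phi)+(\Phi-Q_0\Phi)$. The only cosmetic difference is that the paper handles the symmetrization by computing $(\nabla_g Q_h\Phi,\boldsymbol\psi)_T$ and $((\nabla_g Q_h\Phi)^{\mathrm T},\boldsymbol\psi)_T$ separately and averaging, whereas you use the self-adjointness of $\operatorname{sym}$; these are the same calculation, and in fact the paper's step \eqref{EQ:prof-Qh-property-2} tacitly evaluates $(\delta_{g_1,T}Q_h\Phi,\boldsymbol\psi^{\mathrm T})_T$ via \eqref{weak gradient-2}, which likewise requires $\boldsymbol\psi^{\mathrm T}\in G_1(T)$ --- so the transposition-closure hypothesis you flag is needed in both arguments, and your making it explicit is an improvement.
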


\begin{proof}
From \eqref{weak gradient-1} and \eqref{weak gradient-2}, we obtain
\begin{equation}\label{EQ:prof-Qh-property-1}
\begin{split}
({\nabla}_g Q_h\Phi,\boldsymbol{\psi})_T =& ({\nabla} Q_0\Phi+{\delta}_{g_1} Q_h\Phi,\boldsymbol{\psi})_T\\
=&({\nabla} Q_0\Phi,\boldsymbol{\psi})_T+\langle R_b(Q_b\Phi-Q_0\Phi),\boldsymbol{\psi} \mathbf{n}\rangle_{\partial T}\\
=&({\nabla} \Phi,\boldsymbol{\psi})_T + ({\nabla} (Q_0\Phi-\Phi),\boldsymbol{\psi})_T\\
&+\langle R_b(Q_b\Phi-\Phi),\boldsymbol{\psi} \mathbf{n}\rangle_{\partial T} + \langle R_b(\Phi-Q_0\Phi),\boldsymbol{\psi} \mathbf{n}\rangle_{\partial T}.
\end{split}
\end{equation}
Analogously, the following equation holds:
\begin{equation}\label{EQ:prof-Qh-property-2}
\begin{split}
({\nabla}_g Q_h\Phi^{\mathrm{T}},\boldsymbol{\psi})_T = & ({\nabla}\Phi^{\mathrm{T}},\boldsymbol{\psi})_T + (\nabla(Q_0\Phi-\Phi)^{\mathrm{T}}, \boldsymbol{\psi})_T\\
&+\langle R_b(Q_b\Phi-\Phi),\boldsymbol{\psi}^{\mathrm{T}}\mathbf{n}\rangle_{\partial T} + \langle R_b(\Phi-Q_0\Phi),\boldsymbol{\psi}^{\mathrm{T}}\mathbf{n}\rangle_{\partial T}.
\end{split}
\end{equation}
Combining \eqref{EQ:prof-Qh-property-1} with \eqref{EQ:prof-Qh-property-2} yields \eqref{EQ:Qh-property-epsilon}. 

Next, from \eqref{weak divergence-1} and \eqref{weak divergence-2}, we have
\begin{equation}\label{EQ:prof-Qh-property-3}
\begin{split}
({\nabla}_g \cdot Q_h\Phi,\phi)_T =& ({\nabla} \cdot Q_0\Phi+{\delta}_{g_2} Q_h\Phi,\phi)_T\\
=&({\nabla}\cdot Q_0\Phi,\phi)_T+\langle R_b(Q_b\Phi-Q_0\Phi),\phi\mathbf{n}\rangle_{\partial T}\\
=&({\nabla}\cdot \Phi,\phi)_T + ({\nabla}\cdot (Q_0\Phi-\Phi,\phi)_T\\
&+\langle R_b(Q_b\Phi-\Phi),\phi\mathbf{n}\rangle_{\partial T} + \langle R_b(\Phi-Q_0\Phi),\phi\mathbf{n}\rangle_{\partial T},
\end{split}
\end{equation}
which verifies \eqref{EQ:Qh-property-wd}.
\end{proof}

\begin{lemma}
Let $\bm{u}\in [H^1(\Omega)]^d$ and $\bm{u}_h$ be the solutions of \eqref{eq:Problem} and \eqref{gWG_algorithm}, respectively. Then the error $\bm{e}_h=Q_h \bm{u}-\bm{u}_h$ belongs to $V_h^0$ and satisfies
\begin{equation}\label{error equ}
a(\bm{e}_h,\bm{v}) + s(\bm{e}_h,\bm{v}) = \ell(\bm{u},\bm{v}),\quad \forall \bm{v} \in V_h^0,
\end{equation}
where
\begin{equation}
\begin{split}
\ell(\bm{u},\bm{v}) = & (\varepsilon(Q_0 \bm{u}),2\mu (I-\mathbb{Q}_h) \varepsilon_g(\bm{v}))_{\mathcal{T}_h} + ({\nabla} \cdot Q_0\bm{u},\lambda (I-\mathcal{Q}_h) \nabla_g \cdot \bm{v})_{\mathcal{T}_h} + s(Q_h \bm{u},\bm{v})\\
&+ \langle R_b(\bm{v}_0-\bm{v}_b),(2\mu (I-\mathbb{Q}_h) \varepsilon (\bm{u}) + \lambda (I-\mathcal{Q}_h) ({\nabla}\cdot\bm{u})\,{\rm I}) \mathbf{n}\rangle_{\partial \mathcal{T}_h}\\
&+ \langle \sigma(\bm{u})\mathbf{n},(I-R_b)(\bm{v}_0-\bm{v}_b) \rangle_{\partial \mathcal{T}_h} \\
&+ ( 2\mu (\mathbb{Q}_h-I) \varepsilon(\bm{u}),\varepsilon (\bm{v}_0))_{\mathcal{T}_h} + (\lambda (\mathcal{Q}_h-I) {\nabla}\cdot\bm{u},\nabla \cdot \bm{v}_0)_{\mathcal{T}_h} \\
&+ (\varepsilon(Q_0\bm{u}-\bm{u}), 2\mu \mathbb{Q}_h \varepsilon_g(\bm{v}))_{\mathcal{T}_h} + (\nabla\cdot(Q_0\bm{u}-\bm{u}), \lambda \mathcal{Q}_h \nabla_g \cdot \bm{v})_{\mathcal{T}_h}\\
&+\langle R_b(Q_b\bm{u}-\bm{u}),(2\mu \mathbb{Q}_h \varepsilon_g(\bm{v}) + \lambda \mathcal{Q}_h \nabla_g \cdot \bm{v} \,{\rm I})\mathbf{n}\rangle_{\partial {\mathcal{T}_h}} \\
&+ \langle R_b(\bm{u}-Q_0\bm{u}),(2\mu \mathbb{Q}_h \varepsilon_g(\bm{v}) + \lambda \mathcal{Q}_h \nabla_g \cdot \bm{v} \,{\rm I})\mathbf{n}\rangle_{\partial {\mathcal{T}_h}}.\\
\end{split}
\end{equation}
\end{lemma}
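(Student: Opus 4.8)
The plan is to establish the membership $\bm{e}_h\in V_h^0$ directly from the boundary data, and then to obtain \eqref{error equ} by expanding $a(Q_h\bm u,\bm v)+s(Q_h\bm u,\bm v)-(\bm f,\bm v_0)$ and matching it against $\ell(\bm u,\bm v)$ term by term. First I would check the membership: on each $e\subset\partial\Omega$ the boundary component of $Q_h\bm u$ is $Q_b\bm u=Q_b\bm g$ since $\bm u=\bm g$ on $\partial\Omega$, and this coincides with $\bm u_h|_{\partial\Omega}$ by the constraint in Algorithm \ref{linear elasticity algorithm}, so the boundary part of $\bm e_h$ vanishes and $\bm e_h\in V_h^0$. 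Because $a(\bm u_h,\bm v)+s(\bm u_h,\bm v)=(\bm f,\bm v_0)$, bilinearity reduces \eqref{error equ} to the identity
\[
a(Q_h\bm u,\bm v)+s(Q_h\bm u,\bm v)-(\bm f,\bm v_0)=\ell(\bm u,\bm v),\qquad \bm v\in V_h^0,
\]
in which the stabilization contribution $s(Q_h\bm u,\bm v)$ is already present on the right-hand side.

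The two structural ingredients are integration by parts on the continuous problem and the projection identities \eqref{EQ:Qh-property-epsilon}--\eqref{EQ:Qh-property-wd}. Testing $-\nabla\cdot\sigma(\bm u)=\bm f$ against $\bm v_0$ elementwise and using symmetry of $\sigma(\bm u)$ gives $(\bm f,\bm v_0)_{\cT_h}=(\sigma(\bm u),\varepsilon(\bm v_0))_{\cT_h}-\langle\sigma(\bm u)\mathbf n,\bm v_0\rangle_{\partial\cT_h}$; since $\bm u$ is the exact solution the normal stress $\sigma(\bm u)\mathbf n$ is single-valued across interior faces, while $\bm v_b$ is single-valued and vanishes on $\partial\Omega$, so $\sum_{T}\langle\sigma(\bm u)\mathbf n,\bm v_b\rangle_{\partial T}=0$ and the boundary term may be rewritten with the jump $\bm v_0-\bm v_b$. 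Splitting $\sigma(\bm u)=2\mu\varepsilon(\bm u)+\lambda(\nabla\cdot\bm u)\,{\rm I}$ separates the $\mu$ and $\lambda$ contributions throughout.

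For $a(Q_h\bm u,\bm v)$ I would split the test tensors as $\varepsilon_g(\bm v)=\mathbb{Q}_h\varepsilon_g(\bm v)+(I-\mathbb{Q}_h)\varepsilon_g(\bm v)$ and $\nabla_g\cdot\bm v=\mathcal{Q}_h\nabla_g\cdot\bm v+(I-\mathcal{Q}_h)\nabla_g\cdot\bm v$. The $(I-\mathbb{Q}_h)$ and $(I-\mathcal{Q}_h)$ parts are $L^2(T)$-orthogonal to $G_1(T)$ and $G_2(T)$, so the correction pieces of $\varepsilon_g(Q_h\bm u)$ and $\nabla_g\cdot Q_h\bm u$ drop out and these contributions collapse to the volume terms $(\varepsilon(Q_0\bm u),2\mu(I-\mathbb{Q}_h)\varepsilon_g(\bm v))_{\cT_h}$ and $(\nabla\cdot Q_0\bm u,\lambda(I-\mathcal{Q}_h)\nabla_g\cdot\bm v)_{\cT_h}$ of $\ell$. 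To the $\mathbb{Q}_h$ and $\mathcal{Q}_h$ parts I would apply \eqref{EQ:Qh-property-epsilon}--\eqref{EQ:Qh-property-wd} with $\boldsymbol\psi=2\mu\mathbb{Q}_h\varepsilon_g(\bm v)$ and $\phi=\lambda\mathcal{Q}_h\nabla_g\cdot\bm v$, which produces the volume remainders $(\varepsilon(Q_0\bm u-\bm u),\cdot)$, $(\nabla\cdot(Q_0\bm u-\bm u),\cdot)$ together with the boundary remainders paired against $R_b(Q_b\bm u-\bm u)$ and $R_b(\bm u-Q_0\bm u)$ that appear in the last four lines of $\ell$.

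The decisive bookkeeping is the reconciliation of the remaining boundary contributions. Writing $\sigma(\bm u)\mathbf n$ as a $\mathbb{Q}_h/\mathcal{Q}_h$-projected part plus a $(I-\mathbb{Q}_h)/(I-\mathcal{Q}_h)$ part, and splitting the jump as $\bm v_0-\bm v_b=R_b(\bm v_0-\bm v_b)+(I-R_b)(\bm v_0-\bm v_b)$, isolates the $(I-R_b)$ consistency term $\langle\sigma(\bm u)\mathbf n,(I-R_b)(\bm v_0-\bm v_b)\rangle_{\partial\cT_h}$ and the residual-flux term of $\ell$. The $R_b$-projected, $\mathbb{Q}_h/\mathcal{Q}_h$-projected flux piece then cancels exactly against the boundary term generated when the weak-derivative definitions \eqref{weak gradient-2} and \eqref{weak divergence-2} are used to convert the volume pairings $(2\mu\mathbb{Q}_h\varepsilon(\bm u),\varepsilon_g(\bm v))$ and $(\lambda\mathcal{Q}_h(\nabla\cdot\bm u),\nabla_g\cdot\bm v)$ into pairings against $R_b(\bm v_b-\bm v_0)\mathbf n$, while the surviving volume pieces combine with $-(\sigma(\bm u),\varepsilon(\bm v_0))_{\cT_h}$ to yield $(2\mu(\mathbb{Q}_h-I)\varepsilon(\bm u),\varepsilon(\bm v_0))_{\cT_h}+(\lambda(\mathcal{Q}_h-I)\nabla\cdot\bm u,\nabla\cdot\bm v_0)_{\cT_h}$. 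I expect this cancellation of boundary terms to be the main obstacle, compounded by the symmetrization $\tfrac12(\boldsymbol\psi+\boldsymbol\psi^{\mathrm T})$ appearing in \eqref{EQ:Qh-property-epsilon}: this is handled by exploiting the symmetry of $\varepsilon(\bm u)$, $\varepsilon(\bm v_0)$ and $\varepsilon_g(\bm v)$ together with the closure of $G_1(T)$ under transposition, which makes $\mathbb{Q}_h\varepsilon_g(\bm v)$ symmetric and lets one replace $\tfrac12(\boldsymbol\psi+\boldsymbol\psi^{\mathrm T})$ by $\boldsymbol\psi$ in every pairing, after which all contributions align with $\ell(\bm u,\bm v)$.
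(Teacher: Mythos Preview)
Your proposal is correct and follows essentially the same route as the paper's proof: both split the test tensor via $\varepsilon_g(\bm v)=\mathbb{Q}_h\varepsilon_g(\bm v)+(I-\mathbb{Q}_h)\varepsilon_g(\bm v)$, invoke the projection identities \eqref{EQ:Qh-property-epsilon}--\eqref{EQ:Qh-property-wd} with $\boldsymbol\psi=2\mu\mathbb{Q}_h\varepsilon_g(\bm v)$ and $\phi=\lambda\mathcal{Q}_h\nabla_g\cdot\bm v$, integrate by parts elementwise, and perform the same $R_b/(I-R_b)$ and $\mathbb{Q}_h/(I-\mathbb{Q}_h)$ bookkeeping on the boundary flux. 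Your explicit remark that the symmetrization is resolved by the closure of $G_1(T)$ under transposition (making $\mathbb{Q}_h\varepsilon_g(\bm v)$ and $\mathbb{Q}_h\varepsilon(\bm u)$ symmetric) is a hypothesis the paper also needs but leaves tacit in the step $\bigl(2\mu\mathbb{Q}_h\varepsilon(\bm u),\tfrac12(\delta_{g_1}\bm v+\delta_{g_1}\bm v^{\mathrm T})\bigr)_{\cT_h}=\langle R_b(\bm v_b-\bm v_0),2\mu\mathbb{Q}_h\varepsilon(\bm u)\mathbf n\rangle_{\partial\cT_h}$.
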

\begin{proof}
Taking $\Phi=\bm{u}$ and $\boldsymbol{\psi}=2\mu \mathbb{Q}_h \varepsilon_g(\bm{v})$ in \eqref{EQ:Qh-property-epsilon} and summing over all the elements yields 
\begin{equation}\label{error equ 1}
\begin{split}
&(\varepsilon_g(Q_h \bm{u}),2\mu \mathbb{Q}_h \varepsilon_g(\bm{v}))_{\mathcal{T}_h} \\
= & (\varepsilon(\bm{u}),2\mu \mathbb{Q}_h \varepsilon_g(\bm{v}))_{\mathcal{T}_h} + (\varepsilon(Q_0\bm{u}-\bm{u}), 2\mu \mathbb{Q}_h \varepsilon_g(\bm{v}))_{\mathcal{T}_h}\\
&+\langle R_b(Q_b\bm{u}-\bm{u}),2\mu \mathbb{Q}_h \varepsilon_g(\bm{v})\cdot\mathbf{n}\rangle_{\partial {\mathcal{T}_h}} + \langle R_b(\bm{u}-Q_0\bm{u}),2\mu \mathbb{Q}_h \varepsilon_g(\bm{v})\mathbf{n}\rangle_{\partial {\mathcal{T}_h}}.
\end{split}
\end{equation}
Using the $L^2$ projection properties, \eqref{weak gradient-1}-\eqref{weak gradient-2}, integration by parts, together with $\langle 2\mu \varepsilon(\bm{u})\mathbf{n},\bm{v}_b \rangle_{\partial \mathcal{T}_h} = 0$, yields
\begin{equation}\label{error equ 2}
\begin{split}
&(\varepsilon(\bm{u}),2\mu \mathbb{Q}_h \varepsilon_g(\bm{v}))_{\mathcal{T}_h} \\
= & ( 2\mu \mathbb{Q}_h \varepsilon(\bm{u}),\varepsilon_g(\bm{v}))_{\mathcal{T}_h}\\
= & ( 2\mu \mathbb{Q}_h \varepsilon(\bm{u}),\varepsilon (\bm{v}_0))_{\mathcal{T}_h} + \left( 2\mu \mathbb{Q}_h \varepsilon(\bm{u}),\frac{\delta_{g_1}\bm{v}+{\delta_{g_1} \bm{v}}^{\mathrm{T}}}{2}\right)_{\mathcal{T}_h}\\
=& ( 2\mu (\mathbb{Q}_h-I) \varepsilon(\bm{u}),\varepsilon (\bm{v}_0))_{\mathcal{T}_h} + \langle R_b(\bm{v}_b-\bm{v}_0),2\mu \mathbb{Q}_h \varepsilon (\bm{u})\mathbf{n}\rangle_{\partial \mathcal{T}_h}+( 2\mu \varepsilon(\bm{u}),\varepsilon (\bm{v}_0))_{\mathcal{T}_h} \\
=& ( 2\mu (\mathbb{Q}_h-I) \varepsilon(\bm{u}),\varepsilon (\bm{v}_0))_{\mathcal{T}_h} + \langle R_b(\bm{v}_b-\bm{v}_0),2\mu \mathbb{Q}_h \varepsilon (\bm{u})\mathbf{n}\rangle_{\partial \mathcal{T}_h} \\
&- ( \nabla \cdot (2\mu \varepsilon(\bm{u})),\bm{v}_0)_{\mathcal{T}_h} + \langle 2\mu \varepsilon(\bm{u})\mathbf{n},(\bm{v}_0-\bm{v}_b) \rangle_{\partial \mathcal{T}_h}\\
=& - ( \nabla \cdot (2\mu \varepsilon(\bm{u})),\bm{v}_0)_{\mathcal{T}_h} + ( 2\mu (\mathbb{Q}_h-I) \varepsilon(\bm{u}),\varepsilon (\bm{v}_0))_{\mathcal{T}_h} \\
&+ \langle 2\mu \varepsilon(\bm{u})\mathbf{n},(I-R_b)(\bm{v}_0-\bm{v}_b) \rangle_{\partial \mathcal{T}_h} + \langle R_b(\bm{v}_0-\bm{v}_b),2\mu (I-\mathbb{Q}_h) \varepsilon (\bm{u})\mathbf{n}\rangle_{\partial \mathcal{T}_h}.
\end{split}
\end{equation}
Similarly, taking $\Phi=\bm{u}$ and $\phi = \lambda \mathcal{Q}_h \nabla_g \cdot \bm{v}$ in \eqref{EQ:Qh-property-wd} gives
\begin{equation}\label{error equ 3}
\begin{split}
&({\nabla}_g \cdot Q_h\bm{u},\lambda \mathcal{Q}_h \nabla_g \cdot \bm{v})_{\mathcal{T}_h} \\
=& ({\nabla}\cdot\bm{u},\lambda \mathcal{Q}_h \nabla_g \cdot \bm{v})_{\mathcal{T}_h} + (\nabla\cdot(Q_0\bm{u}-\bm{u}), \lambda \mathcal{Q}_h \nabla_g \cdot \bm{v})_{\mathcal{T}_h}\\
& + \langle R_b(Q_b\bm{u}-\bm{u}),\lambda (\mathcal{Q}_h \nabla_g \cdot \bm{v})\mathbf{n}\rangle_{\partial {\mathcal{T}_h}} + \langle R_b(\bm{u}-Q_0\bm{u}),\lambda (\mathcal{Q}_h \nabla_g \cdot \bm{v}) \mathbf{n}\rangle_{\partial {\mathcal{T}_h}}.
\end{split}
\end{equation}

Combining \eqref{weak divergence-1}-\eqref{weak divergence-2} with integration by parts, and using $\langle \lambda ({\nabla}\cdot\bm{u}\,{\rm I}) \mathbf{n},\bm{v}_b \rangle_{\partial \mathcal{T}_h}=0$, we obtain
\begin{equation}\label{error equ 4}
\begin{split}
&({\nabla}\cdot\bm{u},\lambda \mathcal{Q}_h \nabla_g \cdot \bm{v})_{\mathcal{T}_h} \\
= & (\lambda \mathcal{Q}_h {\nabla}\cdot\bm{u},\nabla_g \cdot \bm{v})_{\mathcal{T}_h}\\
= & (\lambda (\mathcal{Q}_h-I) {\nabla}\cdot\bm{u},\nabla \cdot \bm{v}_0)_{\mathcal{T}_h} + \langle R_b(\bm{v}_b-\bm{v}_0),\lambda (\mathcal{Q}_h {\nabla}\cdot\bm{u})\mathbf{n}\rangle_{\partial \mathcal{T}_h}\\
&-(\lambda \nabla\cdot({\nabla}\cdot\bm{u} \,{\rm I}),\bm{v}_0)_{\mathcal{T}_h} + \langle \lambda ({\nabla}\cdot\bm{u}\,{\rm I})\,\mathbf{n},(\bm{v}_0-\bm{v}_b) \rangle_{\partial \mathcal{T}_h} \\
= & -(\lambda \nabla\cdot({\nabla}\cdot\bm{u} \,{\rm I}),\bm{v}_0)_{\mathcal{T}_h} + (\lambda (\mathcal{Q}_h-I) {\nabla}\cdot\bm{u},\nabla \cdot \bm{v}_0)_{\mathcal{T}_h}\\
&+ \langle (\lambda {\nabla}\cdot\bm{u}\,{\rm I})\mathbf{n},(I-R_b)(\bm{v}_0-\bm{v}_b) \rangle_{\partial \mathcal{T}_h} \\
&+ \langle R_b(\bm{v}_0-\bm{v}_b),\lambda (I-\mathcal{Q}_h) ({\nabla}\cdot\bm{u})\,{\rm I}\,\mathbf{n}\rangle_{\partial \mathcal{T}_h}.
\end{split}
\end{equation}
Adding \eqref{error equ 2} and \eqref{error equ 4}, and recalling
$$
(\bm{f},\bm{v}_0)_{\mathcal{T}_h} = (2\mu \varepsilon_g(\bm{u}_h), \varepsilon_g (\bm{v}))_{\cT_h}+(\lambda \nabla_g\cdot \bm{u}_h, \nabla_g \cdot \bm{v})_{\cT_h}+s(Q_h \bm{u},\bm{v})-s(e_h,\bm{v}),
$$
we arrive at
\begin{equation}\label{error equ 5}
\begin{split}
&(\varepsilon(\bm{u}),2\mu \mathbb{Q}_h \varepsilon_g(\bm{v}))_{\mathcal{T}_h} + ({\nabla}\cdot\bm{u},\lambda \mathcal{Q}_h \nabla_g \cdot \bm{v})_{\mathcal{T}_h} \\
= & (\bm{f},\bm{v}_0)_{\mathcal{T}_h} + \langle R_b(\bm{v}_0-\bm{v}_b),(2\mu (I-\mathbb{Q}_h) \varepsilon (\bm{u}) + \lambda (I-\mathcal{Q}_h) ({\nabla}\cdot\bm{u})\,{\rm I} )\mathbf{n}\rangle_{\partial \mathcal{T}_h}\\
&+ \langle \sigma(\bm{u})\mathbf{n},(I-R_b)(\bm{v}_0-\bm{v}_b) \rangle_{\partial \mathcal{T}_h} + ( 2\mu (\mathbb{Q}_h-I) \varepsilon(\bm{u}),\varepsilon (\bm{v}_0))_{\mathcal{T}_h} \\
&+ (\lambda (\mathcal{Q}_h-I) {\nabla}\cdot\bm{u},\nabla \cdot \bm{v}_0)_{\mathcal{T}_h} \\
= & (2\mu \varepsilon_g(\bm{u}_h), \varepsilon_g (\bm{v}))_{\cT_h}+(\lambda \nabla_g\cdot \bm{u}_h, \nabla_g \cdot \bm{v})_{\cT_h}+s(Q_h \bm{u},\bm{v})-s(e_h,\bm{v}) \\
&+ \langle R_b(\bm{v}_0-\bm{v}_b),(2\mu (I-\mathbb{Q}_h) \varepsilon (\bm{u}) + \lambda (I-\mathcal{Q}_h) ({\nabla}\cdot\bm{u})\,{\rm I})\mathbf{n}\rangle_{\partial \mathcal{T}_h}\\
&+ \langle \sigma(\bm{u})\mathbf{n},(I-R_b)(\bm{v}_0-\bm{v}_b) \rangle_{\partial \mathcal{T}_h} + ( 2\mu (\mathbb{Q}_h-I) \varepsilon(\bm{u}),\varepsilon (\bm{v}_0))_{\mathcal{T}_h} \\
&+ (\lambda (\mathcal{Q}_h-I) {\nabla}\cdot\bm{u},\nabla \cdot \bm{v}_0)_{\mathcal{T}_h}. \\
\end{split}
\end{equation}

Finally, substituting \eqref{error equ 1} and \eqref{error equ 3} into \eqref{error equ 5}, and using the projection properties of $\mathbb{Q}_h$ and $\mathcal{Q}_h$ completes the proof of \eqref{error equ}.
\end{proof}

\smallskip
Assume that the following trace inequalities hold true for any $\boldsymbol{\psi} \in \nabla V^0(T) + G_1(T)$ and $\phi \in \nabla\cdot V^0(T) + G_2(T)$:
\begin{align}
\label{trace-inequality-1}\|\boldsymbol{\psi}\|_{\partial T}^2&\leq C h_T^{-1}\|\boldsymbol{\psi}\|_T^2,\\
\label{trace-inequality-2}\|\phi\|_{\partial T}^2&\leq C h_T^{-1}\|\phi\|_T^2.
\end{align}
%
Introduce a semi-norm in $V_h$ as follows
\begin{equation}\label{tbar norm}
\tbar \bm{v} \tbar:=\sqrt{a(\bm{v},\bm{v}) + s(\bm{v},\bm{v})}.
\end{equation}
It can be seen that this semi-norm is indeed a norm for $V_h^0$.

\begin{lemma}
For $\bm{v}\in V_h$, the following inequalities hold true
\begin{align}
\label{v0-1}\| \varepsilon(\bm{v}_0) \|_{\mathcal{T}_h} &\leq C\left(1+h^{\frac{-1-\gamma}{2}}\right) \tbar \bm{v} \tbar,\\ 
\label{v0-2}\| \nabla\cdot\bm{v}_0 \|_{\mathcal{T}_h} &\leq C\left(1+h^{\frac{-1-\gamma}{2}}\right) \tbar \bm{v} \tbar.
\end{align}
\end{lemma}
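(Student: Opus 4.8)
The plan is to recover the classical quantities $\varepsilon(\bm{v}_0)$ and $\nabla\cdot\bm{v}_0$ from their generalized weak counterparts by peeling off the local correction terms $\delta_{g_1}\bm{v}$ and $\delta_{g_2}\bm{v}$, and then to control those corrections through the stabilizer. From \eqref{weak gradient-1}, $\nabla_g\bm{v}=\nabla\bm{v}_0+\delta_{g_1}\bm{v}$, so symmetrizing gives $\varepsilon_g(\bm{v})=\varepsilon(\bm{v}_0)+\frac12(\delta_{g_1}\bm{v}+(\delta_{g_1}\bm{v})^{\mathrm{T}})$, hence
\[
\varepsilon(\bm{v}_0)=\varepsilon_g(\bm{v})-\tfrac12\big(\delta_{g_1}\bm{v}+(\delta_{g_1}\bm{v})^{\mathrm{T}}\big).
\]
Since $\|M^{\mathrm{T}}\|_T=\|M\|_T$, the triangle inequality reduces \eqref{v0-1} to separately estimating $\|\varepsilon_g(\bm{v})\|_{\mathcal{T}_h}$ and $\|\delta_{g_1}\bm{v}\|_{\mathcal{T}_h}$.

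The first piece is immediate and accounts for the constant ``$1$'' in the bound: because $\mu\ge\beta_1>0$ and the divergence term in $a(\cdot,\cdot)$ is nonnegative, one has $2\beta_1\|\varepsilon_g(\bm{v})\|_{\mathcal{T}_h}^2\le a(\bm{v},\bm{v})\le\tbar\bm{v}\tbar^2$, so $\|\varepsilon_g(\bm{v})\|_{\mathcal{T}_h}\le C\tbar\bm{v}\tbar$.

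The correction term is the crux of the argument. On each $T$ I would test the defining relation \eqref{weak gradient-2} with the admissible choice $\boldsymbol{\psi}=\delta_{g_1,T}\bm{v}\in G_1(T)$, giving $\|\delta_{g_1,T}\bm{v}\|_T^2=\langle R_b(\bm{v}_b-\bm{v}_0),\delta_{g_1,T}\bm{v}\,\mathbf{n}\rangle_{\partial T}$. Cauchy--Schwarz on $\partial T$ followed by the trace inequality \eqref{trace-inequality-1} (applicable since $\delta_{g_1,T}\bm{v}\in G_1(T)$) trades the boundary norm of $\delta_{g_1,T}\bm{v}$ for its element norm, leaving $\|\delta_{g_1,T}\bm{v}\|_T\le Ch_T^{-1/2}\|R_b(\bm{v}_0-\bm{v}_b)\|_{\partial T}$. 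Squaring, inserting the stabilizer weight $\rho h_T^{\gamma}$, and summing over $T$ yields
\[
\|\delta_{g_1}\bm{v}\|_{\mathcal{T}_h}^2\le \frac{C}{\rho}\sum_{T\in\mathcal{T}_h}h_T^{-1-\gamma}\,\rho h_T^{\gamma}\|R_b(\bm{v}_0-\bm{v}_b)\|_{\partial T}^2\le Ch^{-1-\gamma}s(\bm{v},\bm{v})\le Ch^{-1-\gamma}\tbar\bm{v}\tbar^2,
\]
so that $\|\delta_{g_1}\bm{v}\|_{\mathcal{T}_h}\le Ch^{\frac{-1-\gamma}{2}}\tbar\bm{v}\tbar$. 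Combining with the previous paragraph proves \eqref{v0-1}.

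The divergence estimate \eqref{v0-2} follows by the identical scheme applied to the weak divergence: write $\nabla\cdot\bm{v}_0=\nabla_g\cdot\bm{v}-\delta_{g_2}\bm{v}$, bound $\|\nabla_g\cdot\bm{v}\|_{\mathcal{T}_h}\le C\tbar\bm{v}\tbar$ using $\lambda\ge\alpha_1>0$, and control $\delta_{g_2}\bm{v}$ exactly as above via \eqref{weak divergence-2} and the trace inequality \eqref{trace-inequality-2}. I expect the only delicate point to be the $h$-bookkeeping in the displayed estimate, specifically the step $h_T^{-1-\gamma}\le Ch^{-1-\gamma}$: this is immediate from $h_T\le h$ when $\gamma\le-1$ (in particular for the recommended $\gamma=-1$, where the factor collapses to a constant), while for $\gamma>-1$ it would rest on comparability of the local mesh sizes. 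Everything else is the self-testing trick together with the assumed trace inequalities.
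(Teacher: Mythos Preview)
Your proposal is correct and follows essentially the same route as the paper: decompose $\varepsilon(\bm{v}_0)=\varepsilon_g(\bm{v})-\frac12(\delta_{g_1}\bm{v}+(\delta_{g_1}\bm{v})^{\mathrm T})$ and $\nabla\cdot\bm{v}_0=\nabla_g\cdot\bm{v}-\delta_{g_2}\bm{v}$, control the weak pieces directly by $\tbar\bm{v}\tbar$, and bound the corrections via the defining relations \eqref{weak gradient-2}--\eqref{weak divergence-2}, the trace inequalities \eqref{trace-inequality-1}--\eqref{trace-inequality-2}, and the stabilizer. The paper phrases the correction estimate as a $\sup$ over test functions rather than self-testing with $\boldsymbol{\psi}=\delta_{g_1,T}\bm{v}$, but the two are equivalent; your remark on the $h_T^{-1-\gamma}\le Ch^{-1-\gamma}$ step is a point the paper simply passes over silently.
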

\begin{proof}
Using \eqref{weak gradient-2}, \eqref{weak divergence-2} and \eqref{trace-inequality-1}-\eqref{trace-inequality-2}, we obtain
\begin{equation}\label{v0-prof}
\begin{split}
\| \delta_{g1}\bm{v} \|_T &= \sup_{\boldsymbol{\psi}\in G_1(T)} \frac{\langle R_b(\bm{v}_b-\bm{v}_0),\boldsymbol{\psi}\mathbf{n}\rangle_{\partial T}}{\|\boldsymbol{\psi}\|_T} \leq Ch^{\frac{-1-\gamma}{2}} \tbar \bm{v} \tbar,\\
\| \delta_{g2}\bm{v} \|_T &= \sup_{\phi\in G_2(T)} \frac{\langle R_b(\bm{v}_b-\bm{v}_0),\phi\mathbf{n}\rangle_{\partial T}}{\| \phi \|_T} \leq C h^{\frac{-1-\gamma}{2}} \tbar \bm{v} \tbar.
\end{split}
\end{equation}
It follows from \eqref{weak gradient-1}, \eqref{weak divergence-1}, the triangle inequality, \eqref{tbar norm} and \eqref{v0-prof} that
\begin{equation*}
\begin{split}
\| \varepsilon(\bm{v}_0) \|_{\mathcal{T}_h} =& \left\| \frac{\nabla\bm{v}_0+\nabla\bm{v}_0^{\mathrm{T}}}{2} \right\|_{\mathcal{T}_h}\\
\leq & \left\| \frac{\nabla_g\bm{v}+\nabla_g\bm{v}^{\mathrm{T}}}{2} \right\|_{\mathcal{T}_h} + \left\| \frac{\delta_{g1}\bm{v}+\delta_{g1}\bm{v}^{\mathrm{T}}}{2} \right\|_{\mathcal{T}_h}\\
\leq & \| \varepsilon_g(\bm{v}) \|_{\mathcal{T}_h} + \| \delta_{g1}\bm{v} \|_{\mathcal{T}_h}\\
\leq & C\left(1+h^{\frac{-1-\gamma}{2}}\right) \tbar \bm{v} \tbar,\\
\\
\| \nabla\cdot\bm{v}_0 \|_{\mathcal{T}_h} \leq & \| \nabla_g\cdot\bm{v} \|_{\mathcal{T}_h} + \| \delta_{g2}\bm{v} \|_{\mathcal{T}_h}\\
\leq & C\left(1+h^{\frac{-1-\gamma}{2}}\right) \tbar \bm{v} \tbar.
\end{split}
\end{equation*}
\end{proof}

\begin{theorem}
Assume that the linear operator $R_b$ satisfies 
\begin{equation}\label{error estimate assumption}
\begin{split}
&\sup_{\bm{v}\in V_h} \frac{\langle \sigma(\bm{u})\mathbf{n},(I-R_b)(\bm{v}_0-\bm{v}_b) \rangle_{\partial \mathcal{T}_h}}{\tbar \bm{v} \tbar} \\
\leq & \ C h^{-\frac{\gamma}{2}} \left( \|(I-R_b)\nabla \bm{u}\|_{\partial \mathcal{T}_h} + \|(I-R_b)\nabla \cdot \bm{u}\|_{\partial \mathcal{T}_h} \right).
\end{split}
\end{equation}
Let $\bm{u}$ and $\bm{u}_h$ be the solutions of \eqref{eq:Problem} and \eqref{gWG_algorithm}, respectively. Then the following a priori bound holds:
\begin{equation}\label{l(u,v)}
\tbar\bm{e}_h\tbar \leq C R(\bm{u}),
\end{equation}
where
\begin{equation}
\begin{split}
R(\bm{u}) =& \left( 1+h^{\frac{-\gamma-1}{2}} \right) \left( \| (I-\mathbb{Q}_h) \nabla \bm{u} \|_{\mathcal{T}_h} + \| (I-\mathcal{Q}_h){\nabla} \cdot \bm{u} \|_{\mathcal{T}_h} \right) \\
&+ h^{-\frac{\gamma}{2}}( \|(I-\mathbb{Q}_h) \nabla \bm{u}\|_{\partial \mathcal{T}_h}+\|(I-\mathcal{Q}_h) ({\nabla}\cdot\bm{u})\|_{\partial \mathcal{T}_h} ) \\
&+ \|\nabla(Q_0\bm{u}-\bm{u}))\|_{\mathcal{T}_h} + \|\nabla\cdot(Q_0\bm{u}-\bm{u})\|_{\mathcal{T}_h} \\
&+ \left(h^{-\frac{1}{2}} + h^{\frac{\gamma}{2}}\right) \left(\|R_b(Q_b\bm{u}-\bm{u})\|_{\partial {\mathcal{T}_h}} + \| R_b(\bm{u}-Q_0\bm{u}) \|_{\partial {\mathcal{T}_h}} \right)\\
&+ h^{-\frac{\gamma}{2}} \left( \|(I-R_b)\nabla \bm{u}\|_{\partial \mathcal{T}_h} + \|(I-R_b)\nabla \cdot \bm{u}\|_{\partial \mathcal{T}_h} \right).
\end{split}
\end{equation}
\end{theorem}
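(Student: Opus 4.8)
The plan is to test the error equation \eqref{error equ} with $\bm{v}=\bm{e}_h$, which is legitimate since $\bm{e}_h\in V_h^0$. By the definition \eqref{tbar norm} of the semi-norm this gives $\tbar\bm{e}_h\tbar^2=a(\bm{e}_h,\bm{e}_h)+s(\bm{e}_h,\bm{e}_h)=\ell(\bm{u},\bm{e}_h)$, so the whole theorem reduces to the single continuity bound $|\ell(\bm{u},\bm{v})|\le C\,R(\bm{u})\,\tbar\bm{v}\tbar$ for every $\bm{v}\in V_h^0$; setting $\bm{v}=\bm{e}_h$ and cancelling one power of $\tbar\bm{e}_h\tbar$ then yields \eqref{l(u,v)}. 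The argument is therefore a term-by-term estimate of the volume and boundary contributions comprising $\ell(\bm{u},\bm{v})$, each matched against one summand of $R(\bm{u})$.

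Before estimating, I would collect the control quantities that follow directly from $\tbar\bm{v}\tbar$. The definition \eqref{tbar norm}, together with the lower bounds $\mu\ge\beta_1$ and $\lambda\ge\alpha_1$, gives $\|\varepsilon_g(\bm{v})\|_{\mathcal{T}_h}\le C\tbar\bm{v}\tbar$ and $\|\nabla_g\cdot\bm{v}\|_{\mathcal{T}_h}\le C\tbar\bm{v}\tbar$, while the stabilizer contributes the element-wise control $h_T^{\gamma/2}\|R_b(\bm{v}_0-\bm{v}_b)\|_{\partial T}\le C\tbar\bm{v}\tbar$. The estimates \eqref{v0-1}--\eqref{v0-2} supply $\|\varepsilon(\bm{v}_0)\|_{\mathcal{T}_h}$ and $\|\nabla\cdot\bm{v}_0\|_{\mathcal{T}_h}$ at the cost of the growth factor $(1+h^{(-1-\gamma)/2})$, and the trace inequalities \eqref{trace-inequality-1}--\eqref{trace-inequality-2} convert the boundary norms of $\mathbb{Q}_h\varepsilon_g(\bm{v})\in G_1(T)$ and $\mathcal{Q}_h\nabla_g\cdot\bm{v}\in G_2(T)$ into $h_T^{-1/2}\|\varepsilon_g(\bm{v})\|_T$ and $h_T^{-1/2}\|\nabla_g\cdot\bm{v}\|_T$. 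Finally I would use that $I-\mathbb{Q}_h$ and $I-\mathcal{Q}_h$ are self-adjoint $L^2$-contractions, so they may be shifted onto whichever factor is convenient and never increase a norm; this is what keeps the growth factor off the terms in which $I-\mathbb{Q}_h$ acts against $\varepsilon_g(\bm{v})$ rather than against $\varepsilon(\bm{v}_0)$.

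With these in place, each contribution falls to Cauchy--Schwarz element-wise followed by summation. The two leading volume terms, in which $I-\mathbb{Q}_h$ (resp.\ $I-\mathcal{Q}_h$) pairs against $\varepsilon_g(\bm{v})$ (resp.\ $\nabla_g\cdot\bm{v}$), split via $\varepsilon(Q_0\bm{u})=\varepsilon(\bm{u})+\varepsilon(Q_0\bm{u}-\bm{u})$ into a clean $\|(I-\mathbb{Q}_h)\nabla\bm{u}\|_{\mathcal{T}_h}$ piece and a $\|\nabla(Q_0\bm{u}-\bm{u})\|_{\mathcal{T}_h}$ piece, neither carrying the growth factor; the same is true of the two terms built from $\mathbb{Q}_h\varepsilon_g(\bm{v})$ and $\mathcal{Q}_h\nabla_g\cdot\bm{v}$, which reproduce $\|\nabla(Q_0\bm{u}-\bm{u})\|_{\mathcal{T}_h}$ and $\|\nabla\cdot(Q_0\bm{u}-\bm{u})\|_{\mathcal{T}_h}$. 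The growth factor $(1+h^{(-\gamma-1)/2})$ enters only through the two terms $(2\mu(\mathbb{Q}_h-I)\varepsilon(\bm{u}),\varepsilon(\bm{v}_0))_{\mathcal{T}_h}$ and $(\lambda(\mathcal{Q}_h-I)\nabla\cdot\bm{u},\nabla\cdot\bm{v}_0)_{\mathcal{T}_h}$, where \eqref{v0-1}--\eqref{v0-2} are invoked, producing the first line of $R(\bm{u})$. The stabilizer term $s(Q_h\bm{u},\bm{v})$ is handled by $s(Q_h\bm{u},Q_h\bm{u})^{1/2}\tbar\bm{v}\tbar$, giving the $h^{\gamma/2}$-weighted projection differences; the boundary term paired with $R_b(\bm{v}_0-\bm{v}_b)$ is split through a discrete Cauchy--Schwarz that extracts $(\sum_T h_T^{\gamma}\|R_b(\bm{v}_0-\bm{v}_b)\|_{\partial T}^2)^{1/2}\le C\tbar\bm{v}\tbar$ and leaves $h^{-\gamma/2}(\|(I-\mathbb{Q}_h)\nabla\bm{u}\|_{\partial\mathcal{T}_h}+\|(I-\mathcal{Q}_h)\nabla\cdot\bm{u}\|_{\partial\mathcal{T}_h})$; and the two boundary terms paired with $R_b(Q_b\bm{u}-\bm{u})$ and $R_b(\bm{u}-Q_0\bm{u})$ use the trace inequalities on $\mathbb{Q}_h\varepsilon_g(\bm{v})$ and $\mathcal{Q}_h\nabla_g\cdot\bm{v}$ to extract the $h^{-1/2}$ weight. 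The remaining, nonconforming consistency term $\langle\sigma(\bm{u})\mathbf{n},(I-R_b)(\bm{v}_0-\bm{v}_b)\rangle_{\partial\mathcal{T}_h}$ is exactly the quantity bounded by hypothesis \eqref{error estimate assumption}, which supplies the last line of $R(\bm{u})$.

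I expect the main obstacle to be the bookkeeping rather than any single inequality: one must route the three distinct negative powers of $h$ --- $(1+h^{(-1-\gamma)/2})$ from \eqref{v0-1}--\eqref{v0-2}, $h^{-1/2}$ from the trace inequalities, and $h^{-\gamma/2}$ from the stabilizer --- onto precisely the summands of $R(\bm{u})$ where they belong, and combine the $\partial\mathcal{T}_h$ pieces through the weighted discrete Cauchy--Schwarz above rather than a naive triangle inequality. Two structural points must also be checked: that $G_1(T)$ is closed under transposition, so that $\|(I-\mathbb{Q}_h)\varepsilon(\bm{u})\|\le\|(I-\mathbb{Q}_h)\nabla\bm{u}\|$ and the symmetric parts in $\varepsilon_g$ are respected by $\mathbb{Q}_h$; and that the standard $L^2$ projection estimates for $Q_0,Q_b,\mathbb{Q}_h,\mathcal{Q}_h$ are available. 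The one genuinely non-routine ingredient is the consistency term: it cannot be reached by the generic toolbox and is the sole reason Assumption \eqref{error estimate assumption} is imposed, so the substantive content lies in verifying that a given $R_b$ (for instance $R_b=Q_b$) actually satisfies \eqref{error estimate assumption}.
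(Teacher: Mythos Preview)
Your proposal is correct and follows essentially the same route as the paper: test the error equation with $\bm{v}=\bm{e}_h$, reduce to $\tbar\bm{e}_h\tbar^2=\ell(\bm{u},\bm{e}_h)$, and bound each summand of $\ell$ by Cauchy--Schwarz, routing the powers $h^{-\gamma/2}$, $h^{-1/2}$, and $(1+h^{(-1-\gamma)/2})$ through the stabilizer, the trace inequalities \eqref{trace-inequality-1}--\eqref{trace-inequality-2}, and \eqref{v0-1}--\eqref{v0-2} exactly as you describe. Your split $\varepsilon(Q_0\bm{u})=\varepsilon(\bm{u})+\varepsilon(Q_0\bm{u}-\bm{u})$ for the first pair of volume terms is a minor but cleaner variant of the paper's step $\|(I-\mathbb{Q}_h)\nabla Q_0\bm{u}\|_{\mathcal{T}_h}\le C\|(I-\mathbb{Q}_h)\nabla\bm{u}\|_{\mathcal{T}_h}$, and your explicit flag that $G_1(T)$ must be transpose-invariant is a structural assumption the paper uses silently.
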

\begin{proof}
Setting $\bm{v}=\bm{e}_h$ in \eqref{error equ} yields $\tbar \bm{e}_h \tbar^2 = |\ell (\bm{u},\bm{e}_h)|$. We now estimate each term appearing in $|\ell (\bm{u},\bm{e}_h)|$. Throughout, the Cauchy-Schwarz inequality and the triangule inequality are repeatedly employed. 

Using the stability and projection properties of $\mathbb{Q}_h$ and $Q_0$, we obtain 
\begin{equation}\label{error estimate 1}
\begin{split}
&|(\varepsilon(Q_0 \bm{u}),2\mu (I-\mathbb{Q}_h) \varepsilon_g(\bm{e}_h))_{\mathcal{T}_h}| \\
=& |(\varepsilon(Q_0 \bm{u}),2\mu (I-\mathbb{Q}_h) \varepsilon_g(\bm{e}_h))_{\mathcal{T}_h} - (\mathbb{Q}_h\varepsilon(Q_0 \bm{u}),2\mu (I-\mathbb{Q}_h) \varepsilon_g(\bm{e}_h))_{\mathcal{T}_h}| \\
=& |((I-\mathbb{Q}_h)\varepsilon(Q_0 \bm{u}),2\mu (I-\mathbb{Q}_h) \varepsilon_g(\bm{e}_h))_{\mathcal{T}_h}| \\
=& |((I-\mathbb{Q}_h)\varepsilon(Q_0 \bm{u}),2\mu \varepsilon_g(\bm{e}_h))_{\mathcal{T}_h}| \\
\leq& C\| (I-\mathbb{Q}_h)\varepsilon(Q_0 \bm{u}) \|_{\mathcal{T}_h} \, \|\varepsilon_g(\bm{e}_h)\|_{\mathcal{T}_h} \\
\leq& C\| (I-\mathbb{Q}_h)(\nabla Q_0 \bm{u}) \|_{\mathcal{T}_h} \, \|\varepsilon_g(\bm{e}_h)\|_{\mathcal{T}_h} \\
\leq& C\| (I-\mathbb{Q}_h)(\nabla \bm{u}) \|_{\mathcal{T}_h} \, \tbar\bm{e}_h\tbar. \\
\end{split}
\end{equation}
Similarly,
\begin{equation}\label{error estimate 2}
\begin{split}
|({\nabla} \cdot Q_0\bm{u},\lambda (I-\mathcal{Q}_h) \nabla_g \cdot \bm{e}_h)_{\mathcal{T}_h}| =& |((I-\mathcal{Q}_h) {\nabla} \cdot Q_0\bm{u},\lambda \nabla_g \cdot \bm{e}_h)_{\mathcal{T}_h}| \\
\leq& C \| (I-\mathcal{Q}_h){\nabla} \cdot Q_0\bm{u} \|_{\mathcal{T}_h} \, \|\nabla_g \cdot \bm{e}_h\|_{\mathcal{T}_h} \\
\leq& C \| (I-\mathcal{Q}_h){\nabla} \cdot \bm{u} \|_{\mathcal{T}_h} \, \tbar\bm{e}_h\tbar. \\
\end{split}
\end{equation}
From \eqref{tbar norm} we have
\begin{equation}\label{error estimate 3}
\begin{split}
|s(Q_h \bm{u},\bm{e}_h)| =& \left|\sum_{T\in \cT_h}\rho h_T^{\gamma}\langle R_b(Q_0 \bm{u}-Q_b \bm{u}),R_b(\bm{e}_0-\bm{e}_b) \rangle_{\partial T}\right| \\
\leq& Ch^{\frac{\gamma}{2}} \|R_b(Q_0 \bm{u}-Q_b \bm{u})\|_{\partial \mathcal{T}_h}\,h^{\frac{\gamma}{2}} \|R_b(\bm{e}_0-\bm{e}_b)\|_{\partial \mathcal{T}_h}\\
\leq& Ch^{\frac{\gamma}{2}} \left( \|R_b(Q_0 \bm{u}-\bm{u})\|_{\partial \mathcal{T}_h}+\|R_b(\bm{u}-Q_b \bm{u})\|_{\partial \mathcal{T}_h} \right) \, \tbar\bm{e}_h\tbar. \\
\end{split}
\end{equation}
Similarly,
\begin{equation}\label{error estimate 4}
\begin{split}
&|\langle R_b(\bm{e}_0-\bm{e}_b),(2\mu (I-\mathbb{Q}_h) \varepsilon (\bm{u}) + \lambda (I-\mathcal{Q}_h) ({\nabla}\cdot\bm{u})\,{\rm I})\mathbf{n}\rangle_{\partial \mathcal{T}_h}| \\
\leq& C\| R_b(\bm{e}_0-\bm{e}_b) \|_{\partial \mathcal{T}_h} \, ( \|(I-\mathbb{Q}_h) \varepsilon (\bm{u})\|_{\partial \mathcal{T}_h}+\|(I-\mathcal{Q}_h) ({\nabla}\cdot\bm{u})\|_{\partial \mathcal{T}_h} )\\
\leq& Ch^{-\frac{\gamma}{2}}( \|(I-\mathbb{Q}_h) \nabla \bm{u}\|_{\partial \mathcal{T}_h}+\|(I-\mathcal{Q}_h) ({\nabla}\cdot\bm{u})\|_{\partial \mathcal{T}_h} )\, \tbar \bm{e}_h \tbar.\\
\end{split}
\end{equation}

Next, using \eqref{v0-1} and \eqref{v0-2},
\begin{equation}\label{error estimate 5}
\begin{split}
&|( 2\mu (\mathbb{Q}_h-I) \varepsilon(\bm{u}),\varepsilon (\bm{e}_0))_{\mathcal{T}_h} + (\lambda (\mathcal{Q}_h-I) {\nabla}\cdot\bm{u},\nabla \cdot \bm{e}_0)_{\mathcal{T}_h}| \\
\leq& C \left(\|(\mathbb{Q}_h-I) \varepsilon(\bm{u})\|_{\mathcal{T}_h} \, \|\varepsilon (\bm{e}_0)\|_{\mathcal{T}_h} + \| (\mathcal{Q}_h-I) {\nabla}\cdot\bm{u} \|_{\mathcal{T}_h} \, \| \nabla \cdot \bm{e}_0 \|_{\mathcal{T}_h} \right) \\
\leq& C \left(1+h^{\frac{-1-\gamma}{2}}\right)\left( \|(\mathbb{Q}_h-I) \nabla \bm{u} \|_{\mathcal{T}_h} + \|(\mathcal{Q}_h-I) {\nabla}\cdot\bm{u}\|_{\mathcal{T}_h} \right) \, \tbar \bm{e}_h \tbar. \\
\end{split}
\end{equation}
From projection properties,
\begin{equation}\label{error estimate 6}
\begin{split}
&|(\varepsilon(Q_0\bm{u}-\bm{u}), 2\mu \mathbb{Q}_h \varepsilon_g(\bm{e}_h))_{\mathcal{T}_h} + (\nabla\cdot(Q_0\bm{u}-\bm{u}), \lambda \mathcal{Q}_h \nabla_g \cdot \bm{e}_h)_{\mathcal{T}_h}|\\
\leq& C \left( \|\varepsilon(Q_0\bm{u}-\bm{u})\|_{\mathcal{T}_h} \, \|\mathbb{Q}_h \varepsilon_g(\bm{e}_h)\|_{\mathcal{T}_h} + \|\nabla\cdot(Q_0\bm{u}-\bm{u})\|_{\mathcal{T}_h} \, \| \mathcal{Q}_h \nabla_g \cdot \bm{e}_h \|_{\mathcal{T}_h} \right) \\
\leq& C \left( \|\nabla(Q_0\bm{u}-\bm{u})\|_{\mathcal{T}_h} \, \|\varepsilon_g(\bm{e}_h)\|_{\mathcal{T}_h} + \|\nabla\cdot(Q_0\bm{u}-\bm{u})\|_{\mathcal{T}_h} \, \| \nabla_g \cdot \bm{e}_h \|_{\mathcal{T}_h} \right) \\
\leq& C \left( \|\nabla(Q_0\bm{u}-\bm{u})\|_{\mathcal{T}_h} + \|\nabla\cdot(Q_0\bm{u}-\bm{u})\|_{\mathcal{T}_h} \right) \, \tbar \bm{e}_h \tbar. \\
\end{split}
\end{equation}
Finally, using the trace inequalities \eqref{trace-inequality-1}--\eqref{trace-inequality-2},
\begin{equation}\label{error estimate 7}
\begin{split}
&|\langle R_b(Q_b\bm{u}-\bm{u})+R_b(\bm{u}-Q_0\bm{u}),(2\mu \mathbb{Q}_h \varepsilon_g(\bm{e}_h) + \lambda \mathcal{Q}_h \nabla_g \cdot \bm{e}_h \,{\rm I})\mathbf{n}\rangle_{\partial {\mathcal{T}_h}}| \\
\leq& C \left(\|R_b(Q_b\bm{u}-\bm{u})\|_{\partial {\mathcal{T}_h}} + \| R_b(\bm{u}-Q_0\bm{u}) \|_{\partial {\mathcal{T}_h}} \right) \, \left( \|\mathbb{Q}_h \varepsilon_g(\bm{e}_h)\|_{\partial {\mathcal{T}_h}}+\|\mathcal{Q}_h \nabla_g \cdot \bm{e}_h\|_{\partial {\mathcal{T}_h}} \right) \\
\leq& C \left(\|R_b(Q_b\bm{u}-\bm{u})\|_{\partial {\mathcal{T}_h}} + \| R_b(\bm{u}-Q_0\bm{u}) \|_{\partial {\mathcal{T}_h}} \right) \, \left( \|\varepsilon_g(\bm{e}_h)\|_{\partial {\mathcal{T}_h}}+\|\nabla_g \cdot \bm{e}_h\|_{\partial {\mathcal{T}_h}} \right) \\
\leq& C h^{-\frac{1}{2}} \left(\|R_b(Q_b\bm{u}-\bm{u})\|_{\partial {\mathcal{T}_h}} + \| R_b(\bm{u}-Q_0\bm{u}) \|_{\partial {\mathcal{T}_h}} \right) \, \tbar \bm{e}_h \tbar. \\
\end{split}
\end{equation}
Combining \eqref{error estimate 1}--\eqref{error estimate 7} with \eqref{error estimate assumption} yields \eqref{l(u,v)}, and the proof is complete.
\end{proof}

The estimate \eqref{l(u,v)} quantifies how the discretization error is governed by the approximation properties of the projection operators $\mathbb{Q}_h$, $\mathcal{Q}_h$, and $Q_0$, together with the stabilization operator $R_b$. More precisely, $R(\bm{u})$ collects the contributions from element interior projection errors, boundary projection errors, and local stabilization mismatch along inter-element boundaries. The parameter $\gamma$ controls the relative strength of the boundary stabilization, and hence determines how the interior and boundary projection terms scale with respect to $h$. In particular, the assumption \eqref{error estimate assumption} ensures that the boundary residual produced by $R_b$ can be bounded in terms of computable projection defects of the exact solution. As a result, \eqref{l(u,v)} shows that the discrete error is fully dominated by these projection errors, which is the key mechanism enabling optimal-order convergence once appropriate approximation estimates are invoked.

\section{Numerical Examples}\label{EX}
\smallskip
In this section, we present two numerical examples to demonstrate the performance of the proposed gWG algorithm under different mesh partitions and choices of possible non-polynomial approximation spaces. Throughout the computation, we set $\rho=1$, take the spatial dimension $d=2$, and consider the computational domain $\Omega=(0,1)^{2}$. The Dirichlet boundary data and the forcing term $\bm{f}$ are chosen so that the exact solution is the selected $\bm{u}$. The linear operator $R_b$ is selected as either $Q_b$ or the identity operator $I$. The gWG finite element space for linear elasticity has the general form
$$
V^0(T)|V^b(\partial T)|G_1(T)|G_2(T).
$$ 
We will examine several combinations of the following local spaces:
\begin{equation*}
\begin{split}
&V^0(T)=\text{span}\left\{ \left[\begin{matrix} 1\\0 \end{matrix}\right],\,\left[\begin{matrix} 0\\ 1 \end{matrix}\right],\, \{\bm{\sigma}(t_{i})\}_{i=1}^{p} \right\} \text{ or } [P_1(T)]^2,\\
&V^b(\partial T)=[P_k(\partial T)]^{2} \, (k=0,1) \text{ or } \text{RM}(\partial T),\\
&G_1(T)=[P_0(T)]^{2\times2},\,G_2(T)=P_0(T).
\end{split}
\end{equation*}
Here the random variable $t_{i}$ on each element $T$ is defined by $t_i=\bm{w}_{0i}\cdot(X-X_{0i})$, where 
$$
\bm{w}_{0i}=\left[\begin{matrix} \text{rand}(1)-0.5\\ \text{rand}(1)-0.5 \end{matrix}\right], \ \ X=(x,y)^T,
$$ 
and rand(1) is the order of generating random number in MATLAB between $0$ and $1$. The random point $X_{0i}$ will be illustrated later on. The selected activation-based basis functions are described below:
\begin{align}
& \{\mbox{sin}~\bm{\sigma}(t_i)\}_{i=1}^{4}=\left\{\left[\begin{matrix}\sin(t_1)\\0\end{matrix}\right],\,\left[\begin{matrix}0\\\sin(t_2)\end{matrix}\right],\,\left[\begin{matrix}\sin(t_3)\\0\end{matrix}\right],\,\left[\begin{matrix}0\\\sin(t_4)\end{matrix}\right]\right\},\nonumber\\
& \{\mbox{cos}~\bm{\sigma}(t_i)\}_{i=1}^{4}=\left\{\left[\begin{matrix}\cos(t_1)\\0\end{matrix}\right],\,\left[\begin{matrix}0\\\cos(t_2)\end{matrix}\right],\,\left[\begin{matrix}\cos(t_3)\\0\end{matrix}\right],\,\left[\begin{matrix}0\\\cos(t_4)\end{matrix}\right]\right\},\nonumber\\
& \{\mbox{Sigmoid}~\bm{\sigma}(t_i)\}_{i=1}^{4}=\left\{\left[\begin{matrix}\frac{1}{1+e^{-t_1}}\\0\end{matrix}\right],\,\left[\begin{matrix}0\\\frac{1}{1+e^{-t_2}}\end{matrix}\right],\,\left[\begin{matrix}\frac{1}{1+e^{-t_3}}\\0\end{matrix}\right],\,\left[\begin{matrix}0\\\frac{1}{1+e^{-t_4}}\end{matrix}\right]\right\},\nonumber\\
& \{\mbox{ReLU}~\bm{\sigma}(t_i)\}_{i=1}^{4}=\left\{\left[\begin{matrix}\max\{0,t_1\}\\0\end{matrix}\right],\,\left[\begin{matrix}0\\\max\{0,t_2\}\end{matrix}\right],\,\left[\begin{matrix}\max\{0,t_3\}\\0\end{matrix}\right],\,\left[\begin{matrix}0\\\max\{0,t_4\}\end{matrix}\right]\right\},\nonumber\\
& \{\mbox{L-ReLU}~\bm{\sigma}(t_i)\}_{i=1}^{4}=\left\{\left[\begin{matrix}\mbox{Leaky-ReLU}~\sigma(t_1)\\0\end{matrix}\right],\,\left[\begin{matrix}0\\\mbox{Leaky-ReLU}~\sigma(t_2)\end{matrix}\right]\right.,\,\nonumber\\
&\qquad\qquad\qquad\qquad\qquad\qquad\qquad\quad\left.\left[\begin{matrix}\mbox{Leaky-ReLU}~\sigma(t_3)\\0\end{matrix}\right],\,\left[\begin{matrix}0\\\mbox{Leaky-ReLU}~\sigma(t_4)\end{matrix}\right]\right\},\nonumber
\end{align}
where 
$$
\mbox{Leaky-ReLU}~\sigma(t)=\begin{cases} t& \mbox{if}~ t>0,\\ \varepsilon t& \mbox{otherwise.}\end{cases}
$$

\smallskip
\noindent
{\bf Uniform rectangular partition:} For each rectangular element, the vertices $A_j=[x_{j};\,y_{j}]$ $(j=1,\ldots,4)$ are ordered counterclockwise starting from the lower-left corner. The random points $X_{0i}$ are generated by
$$
X_{0i}=\left[\begin{matrix}\text{rand}(1)\cdot(x_{2}-x_{1})+x_{1}\\\text{rand}(1)\cdot(y_{3}-y_{2})+y_{2}\end{matrix}\right], \qquad (i=1,\ldots,p).
$$

\smallskip
\noindent
{\bf Uniform triangular partition:} The finite element partition consists of triangular elements generated using \texttt{delaunayTriangulation} in MATLAB. For each element $T$, denote its three vertices by $A_j$ $(j=1,2,3)$. The random points are generated as
$$
X_{0i}=\beta_{0i}\cdot(\alpha_{0i} A_1+(1-\alpha_{0i}) A_2) + (1-\beta_{0i}) A_3, \qquad (i=1,\ldots,p),
$$
where $\alpha_{0i}=\text{rand}(1)$ and $\beta_{0i}=\sqrt{\text{rand}(1)}$.

\smallskip
The discrete $L_2$ and $L_{\infty}$ errors are defined by
\begin{align*}
\|\bm{u}-\bm{u}_0\|&=\left(\sum_{T\in\mathcal{T}_h}\int_T |\bm{u}-\bm{u}_0|^2 dT\right)^{\frac{1}{2}},\\
\|\bm{u}-\bm{u}_b\|&=\left(\sum_{e\in\mathcal{E}_h}h_e\int_e |\bm{u}-\bm{u}_b|^2 ds\right)^{\frac{1}{2}},\\
\|\bm{u}-\bm{u}_0\|_{\infty}&=\max_{{T\in\mathcal{T}_h}}\left|{u}_i(x_c,y_c)-{u}_{0,i}(x_c,y_c)\right|, \quad i=1,2,\\
\|\bm{u}-\bm{u}_b\|_{\infty}&=\max_{{e\in\mathcal{E}_h}}|{u}_i(x_{m},y_{m})-{u}_{b,i}(x_{m},y_{m})|,\quad i=1,2,
\end{align*}
where $(x_c,y_c)$ and $(x_{m},y_{m})$ denote the barycenter of element $T$ and the midpoint of edge $e$, respectively.

\begin{exmp}\label{general EX}
Consider the exact solution 
$$
\bm{u}=\left[\begin{matrix}\sin(x) \sin(y) \\ 1\end{matrix}\right].
$$
The Lam\'{e} constants are chosen as $\mu=0.5$ and $\lambda=1$. Tables \ref{general_EX_Triangle}--\ref{general_EX_Rectangle_2} report the numerical errors and convergence rates for various combinations of mesh type, $V^0(T)$, $V^b(\partial T)$, $R_b$, and $\gamma$. 
\end{exmp}


\smallskip
Table \ref{general_EX_Triangle} shows that for triangular meshes, the gWG method achieves the expected convergence behavior when $V^0(T)$ includes activation-based enrichments and $V^b(\partial T)=\text{RM}(\partial T)$. In particular, the spaces $V^0_2(T)$, $V^0_4(T)$, and $V^0_6(T)$ all exhibit similar convergence properties under this choice of $V^b(\partial T)$. In contrast, when $V^b(\partial T)=[P_0(\partial T)]^2$ is employed on triangular elements, the results display a divergent tendency. 

\begin{table}[H]
\caption{Errors and convergence rates for gWG space $V^0(T)|V^b(\partial T)|[P_0(T)]^{2\times2}|P_0(T)$ on triangular elements for Example \ref{general EX}; $R_b=Q_b$; $\gamma=-1$; $\mu=0.5$; $\lambda=1$. }\label{general_EX_Triangle}
\small
\centering
\begin{tabular}{|c c c c c c c c c|}
\hline
$h$&$\|\bm{u}-\bm{u}_0\|$&Rate&$\|\bm{u}-\bm{u}_b\|$&Rate&$\|\bm{u}-\bm{u}_0\|_{\infty}$&Rate&$\|\bm{u}-\bm{u}_b\|_{\infty}$&Rate\\
\hline
\multicolumn{9}{|c|}{$V^0_1(T)=[P_1(T)]^2$,~$V^b_1(\partial T)=[P_1(\partial T)]^2$}\\
\hline
1/8&4.86E-03&1.98 &3.44E-03&1.91 &5.47E-03&1.94 &2.85E-03&1.40 \\
1/16&1.22E-03&1.99 &8.94E-04&1.94 &1.40E-03&1.97 &9.07E-04&1.65 \\
1/32&3.06E-04&2.00 &2.27E-04&1.98 &3.61E-04&1.95 &2.63E-04&1.78 \\
1/64&7.66E-05&2.00 &5.70E-05&1.99 &9.41E-05&1.94 &7.14E-05&1.88 \\
\hline
\multicolumn{9}{|c|}{$V^0_2(T)=[P_1(T)]^2$,~$V^b_2(\partial T)=\text{RM}(\partial T)$}\\
\hline
1/8&4.90E-03&1.98 &2.24E-02&1.09 &5.48E-03&1.95 &2.92E-03&1.42 \\
1/16&1.23E-03&1.99 &1.09E-02&1.04 &1.40E-03&1.97 &9.19E-04&1.67 \\
1/32&3.08E-04&2.00 &5.36E-03&1.02 &3.63E-04&1.94 &2.66E-04&1.79 \\
1/64&7.71E-05&2.00 &2.66E-03&1.01 &9.45E-05&1.94 &7.22E-05&1.88 \\
\hline
\multicolumn{9}{|c|}{$V^0_3(T)=[P_1(T)]^2$,~$V^b_3(\partial T)=[P_0(\partial T)]^2$}\\
\hline
1/8&1.01E-02&1.11 &3.49E-02&0.95 &9.92E-03&1.16 &1.76E-02&0.04 \\
1/16&8.59E-03&0.24 &2.17E-02&0.68 &6.69E-03&0.57 &1.75E-02&0.01 \\
1/32&8.41E-03&0.03 &1.72E-02&0.33 &7.18E-03&-0.10 &1.75E-02&0.00 \\
1/64&8.38E-03&0.01 &1.60E-02&0.11 &7.40E-03&-0.04 &1.75E-02&0.00 \\
\hline
\multicolumn{9}{|c|}{$V^0_4(T)=\text{span}\{ [1;0],\,[0;1],\,\{\mbox{sin}~\bm{\sigma}(t_i)\}_{i=1}^{4} \}$,~$V^b_4(\partial T)=RM(\partial T)$}\\
\hline
1/8&4.90E-03&1.98 &2.24E-02&1.09 &5.48E-03&1.95 &2.92E-03&1.42 \\
1/16&1.23E-03&1.99 &1.09E-02&1.04 &1.40E-03&1.97 &9.19E-04&1.67 \\
1/32&3.08E-04&2.00 &5.36E-03&1.02 &3.64E-04&1.94 &2.66E-04&1.79 \\
1/64&7.71E-05&2.00 &2.66E-03&1.01 &9.45E-05&1.95 &7.22E-05&1.88 \\
\hline
\multicolumn{9}{|c|}{$V^0_5(T)=\text{span}\{ [1;0],\,[0;1],\,\{\mbox{sin}~\bm{\sigma}(t_i)\}_{i=1}^{4} \}$,~$V^b_5(\partial T)=[P_0(\partial T)]^2$}\\
\hline
1/8&1.00E-02&1.12 &3.48E-02&0.96 &9.84E-03&1.17 &1.74E-02&0.06 \\
1/16&8.59E-03&0.22 &2.17E-02&0.68 &6.69E-03&0.56 &1.75E-02&-0.01 \\
1/32&8.41E-03&0.03 &1.72E-02&0.33 &7.18E-03&-0.10 &1.74E-02&0.00 \\
1/64&8.37E-03&0.01 &1.59E-02&0.11 &7.40E-03&-0.04 &1.75E-02&0.00 \\
\hline
\multicolumn{9}{|c|}{$V^0_6(T)=\text{span}\{ [1;0],\,[0;1],\,\{\mbox{Sigmoid}~\bm{\sigma}(t_i)\}_{i=1}^{4} \}$,~$V^b_6(\partial T)=\text{RM}(\partial T)$}\\
\hline
1/8&4.90E-03&1.98 &2.24E-02&1.09 &5.48E-03&1.95 &2.91E-03&1.44 \\
1/16&1.23E-03&1.99 &1.09E-02&1.04 &1.40E-03&1.97 &9.19E-04&1.67 \\
1/32&3.09E-04&2.00 &5.36E-03&1.02 &5.59E-04&1.32 &2.66E-04&1.79 \\
1/64&7.71E-05&2.00 &2.66E-03&1.01 &9.45E-05&2.57 &7.22E-05&1.88 \\
\hline
\multicolumn{9}{|c|}{$V^0_7(T)=\text{span}\{ [1;0],\,[0;1],\,\{\mbox{Sigmoid}~\bm{\sigma}(t_i)\}_{i=1}^{4} \}$,~$V^b_7(\partial T)=[P_0(\partial T)]^2$}\\
\hline
1/8&1.01E-02&1.11 &3.49E-02&0.95 &9.90E-03&1.16 &1.76E-02&0.04 \\
1/16&8.59E-03&0.23 &2.17E-02&0.68 &6.69E-03&0.57 &1.75E-02&0.01 \\
1/32&8.41E-03&0.03 &1.72E-02&0.33 &7.17E-03&-0.10 &1.74E-02&0.00 \\
1/64&8.37E-03&0.01 &1.59E-02&0.12 &7.67E-03&-0.10 &2.17E-02&-0.32 \\
\hline
\end{tabular}
\end{table}

\vspace{7cm}

\smallskip
Table \ref{general_EX_Rectangle_1} presents the results on uniform rectangular partitions. For the activation-based spaces $V^0_3(T)$ and $V^0_5(T)$, as well as the polynomial space $V^0_2(T)$, the gWG method exhibits similar convergence behavoir when $\gamma=-1$. For the choice $V^0_4(T)$ with $\gamma=0$, we observe convergence of approximately first order in both $\|u-u_0\|$ and $\|u-u_b\|$. A comparison between Table \ref{general_EX_Triangle} and Table \ref{general_EX_Rectangle_1} indicates that the convergence performance depends not only on the selection of $V^0(T)$ and the stabilization parameter $\gamma$, but also on the underlying mesh partition.

\begin{table}[H]
\caption{Errors and convergence rates for gWG space $V^0(T)|V^b(\partial T)|[P_0(T)]^{2\times2}|P_0(T)$ on rectangular elements for Example \ref{general EX}; $R_b=Q_b$; $\mu=0.5$; $\lambda=1$.}\label{general_EX_Rectangle_1}
\small
\centering
\begin{tabular}{|c c c c c c c c c|}
\hline
$h$&$\|\bm{u}-\bm{u}_0\|$&Rate&$\|\bm{u}-\bm{u}_b\|$&Rate&$\|\bm{u}-\bm{u}_0\|_{\infty}$&Rate&$\|\bm{u}-\bm{u}_b\|_{\infty}$&Rate\\
\hline
\multicolumn{9}{|c|}{$V^0_1(T)=[P_1(T)]^2$,~$V^b_1(\partial T)=\text{RM}(\partial T)$~and~$\gamma=-1$}\\
\hline
1/8&8.36E-03&1.94 &1.80E-02&1.16 &1.00E-02&1.86 &7.46E-03&1.43 \\
1/16&2.12E-03&1.98 &8.46E-03&1.09 &2.82E-03&1.83 &2.22E-03&1.75 \\
1/32&5.33E-04&1.99 &4.11E-03&1.04 &7.60E-04&1.89 &6.10E-04&1.86 \\
1/64&1.33E-04&2.00 &2.03E-03&1.02 &1.99E-04&1.93 &1.61E-04&1.92 \\
\hline
\multicolumn{9}{|c|}{$V^0_2(T)=[P_1(T)]^2$,~$V^b_2(\partial T)=[P_0(\partial T)]^2$~and~$\gamma=-1$}\\
\hline
1/8&8.38E-03&1.95 &2.50E-02&1.13 &1.00E-02&1.87 &7.53E-03&1.57 \\
1/16&2.12E-03&1.98 &1.19E-02&1.07 &2.83E-03&1.83 &2.24E-03&1.75 \\
1/32&5.33E-04&1.99 &5.81E-03&1.03 &7.61E-04&1.89 &6.13E-04&1.87 \\
1/64&1.33E-04&2.00 &2.87E-03&1.02 &1.99E-04&1.93 &1.61E-04&1.93 \\
\hline
\multicolumn{9}{|c|}{$V^0_3(T)=\text{span}\{ [1;0],\,[0;1],\,\{\mbox{sin}~\bm{\sigma}(t_i)\}_{i=1}^{4} \}$,~$V^b_3(\partial T)=[P_0(\partial T)]^2$~and~$\gamma=-1$}\\
\hline
1/8&8.38E-03&1.95 &2.50E-02&1.13 &1.00E-02&1.87 &7.52E-03&1.57 \\
1/16&2.12E-03&1.98 &1.19E-02&1.07 &2.82E-03&1.83 &2.24E-03&1.75 \\
1/32&5.33E-04&1.99 &5.81E-03&1.03 &7.61E-04&1.89 &6.14E-04&1.86 \\
1/64&1.34E-04&2.00 &2.87E-03&1.02 &1.99E-04&1.93 &1.61E-04&1.93 \\
\hline
\multicolumn{9}{|c|}{$V^0_4(T)=\text{span}\{ [1;0],\,[0;1],\,\{\mbox{cos}~\bm{\sigma}(t_i)\}_{i=1}^{4} \}$,~$V^b_4(\partial T)=[P_0(\partial T)]^2$~and~$\gamma=0$}\\
\hline
1/8&4.95E-02&0.94 &2.94E-02&0.99 &5.46E-02&0.93 &2.52E-02&0.63 \\
1/16&2.54E-02&0.96 &1.58E-02&0.90 &3.08E-02&0.83 &1.72E-02&0.55 \\
1/32&1.29E-02&0.97 &8.36E-03&0.92 &1.71E-02&0.85 &1.23E-02&0.48 \\
1/64&6.55E-03&0.98 &4.36E-03&0.94 &9.32E-03&0.87 &6.42E-03&0.94 \\
\hline
\multicolumn{9}{|c|}{$V^0_5(T)=\text{span}\{ [1;0],\,[0;1],\,\{\mbox{Sigmoid}~\bm{\sigma}(t_i)\}_{i=1}^{4} \}$,~$V^b_5(\partial T)=[P_0(\partial T)]^2$~and~$\gamma=-1$}\\
\hline
1/8&8.39E-03&1.95 &2.50E-02&1.13 &1.00E-02&1.86 &7.54E-03&1.56 \\
1/16&2.12E-03&1.98 &1.19E-02&1.07 &2.83E-03&1.83 &2.24E-03&1.75 \\
1/32&5.33E-04&1.99 &5.81E-03&1.03 &7.60E-04&1.89 &6.14E-04&1.87 \\
1/64&1.33E-04&2.00 &2.87E-03&1.02 &1.99E-04&1.93 &1.61E-04&1.93 \\
\hline
\end{tabular}
\end{table}

\smallskip
Table \ref{general_EX_Rectangle_2} illustrates the performance of the Leaky-ReLU and ReLU activation functions used in $V^0(T)$ on rectangular elements. It can be observed that the space $V^0_1(T)$, which extends the classical $P_1$ non-conforming element, achieves convergence rates of $O(h^2)$ for $\|u-u_0\|$ and $O(h)$ for $\|u-u_b\|$. For the space $V^0_2(T)$ with $\gamma=0$, the expected convergence rates are obtained, where the approximate functions are piecewise continuous within each element. In contrast, the ReLU activation functions in spaces $V^0_3(T)$ through $V^0_5(T)$ exhibit performance with uncertainty.

\begin{table}[H]
\caption{Errors and convergence rates for gWG space $V^0(T)|[P_0(\partial T)]^2|[P_0(T)]^{2\times2}|P_0(T)$ on rectangular elements for Example \ref{general EX}; $R_b=Q_b$; $\mu=0.5$; $\lambda=1$. }\label{general_EX_Rectangle_2}
\small
\centering
\begin{tabular}{|c c c c c c c c c|}
\hline
$h$&$\|\bm{u}-\bm{u}_0\|$&Rate&$\|\bm{u}-\bm{u}_b\|$&Rate&$\|\bm{u}-\bm{u}_0\|_{\infty}$&Rate&$\|\bm{u}-\bm{u}_b\|_{\infty}$&Rate\\
\hline
\multicolumn{9}{|c|}{$V^0_1(T)=\text{span}\{ [1;0],\,[0;1],\,\{\mbox{L-ReLU}~\bm{\sigma}(t_i)\}_{i=1}^{4} \}$,~$\gamma=-1$~and~$\varepsilon=1$}\\
\hline
1/8&8.38E-03&1.95 &2.50E-02&1.13 &1.00E-02&1.87 &7.53E-03&1.57 \\
1/16&2.12E-03&1.98 &1.19E-02&1.07 &2.83E-03&1.83 &2.24E-03&1.75 \\
1/32&5.33E-04&1.99 &5.81E-03&1.03 &7.61E-04&1.89 &6.13E-04&1.87 \\
1/64&1.33E-04&2.00 &2.87E-03&1.02 &1.99E-04&1.93 &1.61E-04&1.93 \\
\hline
\multicolumn{9}{|c|}{$V^0_2(T)=\text{span}\{ [1;0],\,[0;1],\,\{\mbox{L-ReLU}~\bm{\sigma}(t_i)\}_{i=1}^{4} \}$,~$\gamma=0$~and~$\varepsilon=1/10$}\\
\hline
1/8&4.89E-02&0.93 &2.98E-02&0.98 &5.37E-02&0.89 &2.71E-02&0.44 \\
1/16&2.52E-02&0.96 &1.58E-02&0.92 &3.10E-02&0.79 &1.98E-02&0.46 \\
1/32&1.29E-02&0.96 &8.36E-03&0.92 &1.72E-02&0.85 &1.10E-02&0.85 \\
1/64&6.56E-03&0.98 &4.37E-03&0.94 &9.13E-03&0.91 &6.62E-03&0.73 \\
\hline
\multicolumn{9}{|c|}{$V^0_3(T)=\text{span}\{ [1;0],\,[0;1],\,\{\mbox{ReLU}~\bm{\sigma}(t_i)\}_{i=1}^{4} \}$~and~$\gamma=-1$}\\
\hline
1/8&1.89E-02&1.32 &2.67E-02&1.03 &1.55E-02&1.48 &2.23E-02&-0.13 \\
1/16&9.46E-03&1.00 &1.32E-02&1.02 &9.11E-03&0.76 &1.30E-02&0.78 \\
1/32&5.33E-03&0.83 &7.52E-03&0.81 &9.22E-03&-0.02 &1.11E-02&0.24 \\
1/64&3.84E-03&0.47 &5.42E-03&0.47 &2.18E-02&-1.24 &3.13E-02&-1.50 \\
\hline
\multicolumn{9}{|c|}{$V^0_4(T)=\text{span}\{ [1;0],\,[0;1],\,\{\mbox{ReLU}~\bm{\sigma}(t_i)\}_{i=1}^{4} \}$~and~$\gamma=0$}\\
\hline
1/8&4.89E-02&0.95 &2.96E-02&0.95 &5.33E-02&0.90 &2.48E-02&0.24 \\
1/16&2.54E-02&0.95 &1.57E-02&0.91 &3.17E-02&0.75 &1.82E-02&0.45 \\
1/32&1.29E-02&0.97 &8.35E-03&0.91 &1.71E-02&0.89 &1.09E-02&0.74 \\
1/64&9.87E-03&0.39 &1.23E-02&-0.55 &1.06E-01&-2.63 &2.61E-01&-4.58 \\
\hline
\multicolumn{9}{|c|}{$V^0_5(T)=\text{span}\{ [1;0],\,[0;1],\,\{\mbox{ReLU}~\bm{\sigma}(t_i)\}_{i=1}^{4} \}$~and~$\gamma=-2$}\\
\hline
1/8&2.09E-02&0.81 &3.32E-02&0.76 &3.86E-02&0.08 &4.36E-02&-0.46 \\
1/16&1.23E-02&0.76 &1.95E-02&0.77 &3.51E-02&0.14 &3.64E-02&0.26 \\
1/32&1.43E-02&-0.22 &2.07E-02&-0.09 &3.77E-02&-0.10 &4.32E-02&-0.25 \\
1/64&1.26E-02&0.18 &1.80E-02&0.21 &3.28E-02&0.20 &3.40E-02&0.34 \\
\hline
\end{tabular}
\end{table}

\begin{exmp}\label{locking free EX}
To investigate the locking-free property, we consider the exact solution
$$
\bm{u}=\left[\begin{matrix}\sin(x) \sin(y) + \frac{x}{\lambda} \\ \cos(x) \cos(y) + \frac{y}{\lambda}\end{matrix}\right].
$$
Tables \ref{locking_free_EX_Rectangle_1}--\ref{locking_free_EX_Tri_3} report the corresponding errors and convergence rates including details on the partition type, choice of $V^0(T)$, $V^b(T)$, $\gamma$, $R_b$, as well as the Lam\'e constants $\mu$ and $\lambda$. 
\end{exmp}

\smallskip
Table \ref{locking_free_EX_Rectangle_1} illustrating the locking-free behavior of the gWG method for both the polynomial spaces and activation-based function spaces with arbitrary $t_{i}$. As $\lambda$ increases, the convergence rates of $V^0_3(T)$ - $V^0_6(T)$ for certain error norms exhibits a slight reduction. Interestingly, for $V^0_7(T)$ and $V^0_8(T)$, the convergence rates for $\lambda=10^6$ are actually better than those for $\lambda=1$, demonstrating the robustness of the method against locking effects.

\begin{table}[h]
\caption{Errors and convergence rates for gWG space $V^0(T)|[P_0(\partial T)]^2|[P_0(T)]^{2\times2}|P_0(T)$ on rectangular elements for Example \ref{locking free EX}; $R_b=Q_b$; $\mu=0.5$.}\label{locking_free_EX_Rectangle_1}
\small
\centering
\begin{tabular}{|c c c c c c c c c|}
\hline
$h$&$\|\bm{u}-\bm{u}_0\|$&Rate&$\|\bm{u}-\bm{u}_b\|$&Rate&$\|\bm{u}-\bm{u}_0\|_{\infty}$&Rate&$\|\bm{u}-\bm{u}_b\|_{\infty}$&Rate\\
\hline
\multicolumn{9}{|c|}{$V^0_1(T)=[P_1(T)]^2$,~$\gamma=-1$~and~$\lambda=1$}\\
\hline
1/8&5.79E-03&1.89 &6.44E-02&1.09 &7.38E-03&1.80 &9.07E-03&1.75 \\
1/16&1.49E-03&1.96 &3.12E-02&1.05 &1.96E-03&1.92 &2.41E-03&1.91 \\
1/32&3.76E-04&1.99 &1.53E-02&1.02 &5.04E-04&1.96 &6.46E-04&1.90 \\
1/64&9.41E-05&2.00 &7.60E-03&1.01 &1.28E-04&1.98 &1.70E-04&1.92 \\
\hline
\multicolumn{9}{|c|}{$V^0_2(T)=[P_1(T)]^2$,~$\gamma=-1$~and~$\lambda=10^6$}\\
\hline
1/8&7.30E-03&1.78 &3.52E-02&1.12 &8.72E-03&1.68 &1.03E-02&1.71 \\
1/16&1.96E-03&1.90 &1.68E-02&1.07 &2.36E-03&1.88 &2.78E-03&1.89 \\
1/32&5.02E-04&1.96 &8.19E-03&1.03 &6.05E-04&1.97 &7.10E-04&1.97 \\
1/64&1.27E-04&1.99 &4.05E-03&1.01 &1.52E-04&1.99 &1.78E-04&1.99 \\
\hline
\multicolumn{9}{|c|}{$V^0_3(T)=\text{span}\{ [1;0],\,[0;1],\,\{\mbox{sin}~\bm{\sigma}(t_i)\}_{i=1}^{4} \}$,~$\gamma=-1$~and~$\lambda=1$}\\
\hline
1/8&5.79E-03&1.96 &6.44E-02&1.09 &7.33E-03&1.99 &9.02E-03&1.85 \\
1/16&1.49E-03&1.96 &3.12E-02&1.05 &1.96E-03&1.90 &2.44E-03&1.89 \\
1/32&3.76E-04&1.99 &1.53E-02&1.02 &5.04E-04&1.96 &6.46E-04&1.92 \\
1/64&9.54E-05&1.98 &7.60E-03&1.01 &1.53E-04&1.72 &2.33E-04&1.47 \\
\hline
\multicolumn{9}{|c|}{$V^0_4(T)=\text{span}\{ [1;0],\,[0;1],\,\{\mbox{sin}~\bm{\sigma}(t_i)\}_{i=1}^{4} \}$,~$\gamma=-1$~and~$\lambda=10^6$}\\
\hline
1/8&1.33E-02&1.78 &3.76E-02&1.13 &1.45E-02&1.48 &3.00E-02&1.14 \\
1/16&4.85E-03&1.46 &1.80E-02&1.06 &6.51E-03&1.16 &1.59E-02&0.92 \\
1/32&1.88E-03&1.37 &8.71E-03&1.05 &2.02E-03&1.69 &8.22E-03&0.95 \\
1/64&8.53E-04&1.14 &4.30E-03&1.02 &1.10E-03&0.88 &4.50E-03&0.87 \\
\hline
\multicolumn{9}{|c|}{$V^0_5(T)=\text{span}\{ [1;0],\,[0;1],\,\{\mbox{cos}~\bm{\sigma}(t_i)\}_{i=1}^{4} \}$,~$\gamma=0$~and~$\lambda=1$}\\
\hline
1/8&6.20E-02&0.89 &6.84E-02&1.05 &4.57E-02&0.41 &4.25E-02&0.46 \\
1/16&3.21E-02&0.95 &3.46E-02&0.98 &2.56E-02&0.84 &2.57E-02&0.72 \\
1/32&1.66E-02&0.95 &1.76E-02&0.98 &1.38E-02&0.89 &1.51E-02&0.77 \\
1/64&8.47E-03&0.97 &8.90E-03&0.98 &7.46E-03&0.89 &8.55E-03&0.82 \\
\hline
\multicolumn{9}{|c|}{$V^0_6(T)=\text{span}\{ [1;0],\,[0;1],\,\{\mbox{cos}~\bm{\sigma}(t_i)\}_{i=1}^{4} \}$,~$\gamma=0$~and~$\lambda=10^6$}\\
\hline
1/8&4.58E-02&0.86 &4.27E-02&0.89 &4.71E-02&0.68 &3.90E-02&0.48 \\
1/16&2.50E-02&0.87 &2.37E-02&0.85 &2.63E-02&0.84 &2.48E-02&0.65 \\
1/32&1.34E-02&0.90 &1.31E-02&0.86 &1.49E-02&0.82 &1.55E-02&0.68 \\
1/64&7.00E-03&0.93 &7.00E-03&0.90 &7.87E-03&0.92 &8.18E-03&0.92 \\
\hline
\multicolumn{9}{|c|}{$V^0_7(T)=\text{span}\{ [1;0],\,[0;1],\,\{\mbox{Sigmoid}~\bm{\sigma}(t_i)\}_{i=1}^{4} \}$,~$\gamma=-1$~and~$\lambda=1$}\\
\hline
1/8&5.81E-03&1.88 &6.44E-02&1.09 &7.37E-03&1.77 &9.05E-03&1.75 \\
1/16&1.49E-03&1.96 &3.12E-02&1.05 &2.05E-03&1.84 &2.42E-03&1.90 \\
1/32&3.76E-04&1.99 &1.53E-02&1.02 &5.04E-04&2.03 &6.46E-04&1.90 \\
1/64&3.53E-04&0.09 &7.62E-03&1.01 &4.15E-04&0.28 &3.44E-03&-2.41 \\
\hline
\multicolumn{9}{|c|}{$V^0_8(T)=\text{span}\{ [1;0],\,[0;1],\,\{\mbox{Sigmoid}~\bm{\sigma}(t_i)\}_{i=1}^{4} \}$,~$\gamma=-1$~and~$\lambda=10^6$}\\
\hline
1/8&1.33E-02&1.45 &3.82E-02&1.06 &1.16E-02&1.54 &3.23E-02&1.01 \\
1/16&4.99E-03&1.41 &1.82E-02&1.07 &6.08E-03&0.93 &1.68E-02&0.95 \\
1/32&1.67E-03&1.58 &8.63E-03&1.08 &1.67E-03&1.87 &8.79E-03&0.93 \\
1/64&6.13E-04&1.45 &4.18E-03&1.05 &9.93E-04&0.75 &4.53E-03&0.96 \\
\hline
\end{tabular}
\end{table}

\smallskip
Table \ref{locking_free_EX_Rectangle_2} illustrates the performance of the gWG algorithm using Leaky-ReLU and ReLU activations. The spaces $V^0_1(T)$ and $V^0_2(T)$, which extend the classical $P_1$ non-conforming element, exhibit clear locking-free behavior. For $V^0_3(T)$ and $V^0_4(T)$ with $\gamma=0$, we observe convergence in $\|u-u_0\|$, $\|u-u_b\|$, and $\|u-u_0\|_{\infty}$ at nearly first-order rates. Regarding the ReLU activation-based function spaces $V^0_5(T)$ - $V^0_8(T)$,  the convergence profiles for $\lambda=10^6$ appear slightly better than those for $\lambda=1$. Compared with $V^0_3(T)$--$V^0_4(T)$ in Table \ref{general_EX_Rectangle_2}, the spaces $V^0_6(T)$ and $V^0_8(T)$ in Table \ref{locking_free_EX_Rectangle_2} achieve the desired convergence rates.

\begin{table}[h]
\caption{Errors and convergence rates for gWG space $V^0(T)|[P_0(\partial T)]^2|[P_0(T)]^{2\times2}|P_0(T)$ on rectangular elements for Example \ref{locking free EX}; $R_b=Q_b$; $\mu=0.5$. }\label{locking_free_EX_Rectangle_2}
\small
\centering
\begin{tabular}{|c c c c c c c c c|}
\hline
$h$&$\|\bm{u}-\bm{u}_0\|$&Rate&$\|\bm{u}-\bm{u}_b\|$&Rate&$\|\bm{u}-\bm{u}_0\|_{\infty}$&Rate&$\|\bm{u}-\bm{u}_b\|_{\infty}$&Rate\\
\hline
\multicolumn{9}{|c|}{$V^0_1(T)=\text{span}\{ [1;0],\,[0;1],\,\{\mbox{L-ReLU}~\bm{\sigma}(t_i)\}_{i=1}^{4} \}$,~$\gamma=-1$,~$\varepsilon=1$~and~$\lambda=1$}\\
\hline
1/8&5.79E-03&1.89 &6.44E-02&1.09 &7.38E-03&1.80 &9.07E-03&1.75 \\
1/16&1.49E-03&1.96 &3.12E-02&1.05 &1.96E-03&1.92 &2.41E-03&1.91 \\
1/32&3.76E-04&1.99 &1.53E-02&1.02 &5.04E-04&1.96 &6.46E-04&1.90 \\
1/64&9.41E-05&2.00 &7.60E-03&1.01 &1.28E-04&1.98 &1.70E-04&1.92 \\
\hline
\multicolumn{9}{|c|}{$V^0_2(T)=\text{span}\{ [1;0],\,[0;1],\,\{\mbox{L-ReLU}~\bm{\sigma}(t_i)\}_{i=1}^{4} \}$,~$\gamma=-1$,~$\varepsilon=1$~and~$\lambda=10^6$}\\
\hline
1/8&7.30E-03&1.78 &3.52E-02&1.12 &8.72E-03&1.68 &1.03E-02&1.71 \\
1/16&1.96E-03&1.90 &1.68E-02&1.07 &2.36E-03&1.88 &2.78E-03&1.89 \\
1/32&5.02E-04&1.96 &8.19E-03&1.03 &6.05E-04&1.97 &7.10E-04&1.97 \\
1/64&1.27E-04&1.99 &4.05E-03&1.01 &1.52E-04&1.99 &1.78E-04&1.99 \\
\hline
\multicolumn{9}{|c|}{$V^0_3(T)=\text{span}\{ [1;0],\,[0;1],\,\{\mbox{L-ReLU}~\bm{\sigma}(t_i)\}_{i=1}^{4} \}$,~$\gamma=0$,~$\varepsilon=1/10$~and~$\lambda=1$}\\
\hline
1/8&5.94E-02&0.93 &6.91E-02&1.02 &4.64E-02&0.65 &4.36E-02&0.18 \\
1/16&3.19E-02&0.90 &3.45E-02&1.00 &2.58E-02&0.85 &2.54E-02&0.78 \\
1/32&1.65E-02&0.95 &1.75E-02&0.98 &1.43E-02&0.85 &1.54E-02&0.72 \\
1/64&8.47E-03&0.97 &8.89E-03&0.98 &7.33E-03&0.96 &7.91E-03&0.96 \\
\hline
\multicolumn{9}{|c|}{$V^0_4(T)=\text{span}\{ [1;0],\,[0;1],\,\{\mbox{L-ReLU}~\bm{\sigma}(t_i)\}_{i=1}^{4} \}$,~$\gamma=0$,~$\varepsilon=1/10$~and~$\lambda=10^6$}\\
\hline
1/8&4.69E-02&0.86 &4.21E-02&0.89 &4.40E-02&0.63 &3.54E-02&0.26 \\
1/16&2.54E-02&0.88 &2.36E-02&0.84 &2.59E-02&0.77 &2.36E-02&0.58 \\
1/32&1.35E-02&0.91 &1.30E-02&0.86 &1.44E-02&0.84 &1.39E-02&0.76 \\
1/64&7.04E-03&0.94 &6.99E-03&0.90 &7.95E-03&0.86 &8.72E-03&0.67 \\
\hline
\multicolumn{9}{|c|}{$V^0_5(T)=\text{span}\{ [1;0],\,[0;1],\,\{\mbox{ReLU}~\bm{\sigma}(t_i)\}_{i=1}^{4} \}$,~$\gamma=-1$~and~$\lambda=1$}\\
\hline
1/8&4.84E-02&1.00 &6.79E-02&1.03 &2.23E-02&1.18 &3.34E-02&0.49 \\
1/16&2.57E-02&0.91 &3.62E-02&0.91 &2.44E-02&-0.13 &2.97E-02&0.17 \\
1/32&1.60E-02&0.69 &2.23E-02&0.70 &2.13E-02&0.20 &2.37E-02&0.32 \\
1/64&1.20E-02&0.42 &1.68E-02&0.41 &2.00E-02&0.09 &2.26E-02&0.07 \\
\hline
\multicolumn{9}{|c|}{$V^0_6(T)=\text{span}\{ [1;0],\,[0;1],\,\{\mbox{ReLU}~\bm{\sigma}(t_i)\}_{i=1}^{4} \}$,~$\gamma=-1$~and~$\lambda=10^6$}\\
\hline
1/8&3.23E-02&1.03 &3.45E-02&1.08 &7.60E-03&0.87 &1.26E-02&-0.54 \\
1/16&1.62E-02&1.00 &1.69E-02&1.04 &4.75E-03&0.68 &9.55E-03&0.40 \\
1/32&8.22E-03&0.97 &8.56E-03&0.98 &3.18E-03&0.58 &4.11E-03&1.22 \\
1/64&4.38E-03&0.91 &4.76E-03&0.85 &5.64E-03&-0.83 &9.40E-03&-1.20 \\
\hline
\multicolumn{9}{|c|}{$V^0_7(T)=\text{span}\{ [1;0],\,[0;1],\,\{\mbox{ReLU}~\bm{\sigma}(t_i)\}_{i=1}^{4} \}$,~$\gamma=0$~and~$\lambda=1$}\\
\hline
1/8&6.25E-02&0.90 &6.87E-02&1.02 &4.27E-02&0.67 &3.64E-02&0.38 \\
1/16&3.26E-02&0.94 &3.48E-02&0.98 &2.57E-02&0.73 &4.17E-02&-0.19 \\
1/32&1.67E-02&0.97 &1.75E-02&0.99 &1.38E-02&0.89 &1.37E-02&1.61 \\
1/64&1.27E-02&0.39 &1.76E-02&0.00 &2.22E-01&-4.01 &3.47E-01&-4.67 \\
\hline
\multicolumn{9}{|c|}{$V^0_8(T)=\text{span}\{ [1;0],\,[0;1],\,\{\mbox{ReLU}~\bm{\sigma}(t_i)\}_{i=1}^{4} \}$,~$\gamma=0$~and~$\lambda=10^6$}\\
\hline
1/8&4.69E-02&0.86 &4.21E-02&0.89 &4.42E-02&0.62 &3.60E-02&0.24 \\
1/16&2.54E-02&0.88 &2.36E-02&0.84 &2.59E-02&0.77 &2.36E-02&0.61 \\
1/32&1.35E-02&0.91 &1.30E-02&0.86 &1.44E-02&0.84 &1.39E-02&0.76 \\
1/64&7.04E-03&0.94 &6.99E-03&0.90 &7.82E-03&0.88 &8.08E-03&0.79 \\
\hline
\end{tabular}
\end{table}

\smallskip
Table \ref{locking_free_EX_Tri_3} shows the performance of the gWG algorithm for $R_b=I$ and $R_b=Q_b$ on triangular partitions. Comparing the cases $V^0_i(T)$ $(i=3,\,5,\,7)$ in Table \ref{general_EX_Triangle} with $V^0_j(T)$ $(j=1,2)$ in Table \ref{locking_free_EX_Tri_3}, we observe that the numerical results deteriorate when $R_b=Q_b$ is used on triangular elements. In contrast, the spaces $V^0_3(T)$--$V^0_8(T)$ in Table \ref{locking_free_EX_Tri_3} achieve the expected convergence rates when $R_b=I$, demonstrating the stability of the identity operator and confirming the locking-free property of the gWG method for linear elasticity problems.

\begin{table}[h]
\caption{Errors and convergence rates for gWG space $V^0(T)|[P_0(\partial T)]^2|[P_0(T)]^{2\times2}|P_0(T)$ on triangular elements for Example \ref{locking free EX}; $\gamma=0$; $\mu=0.5$. }\label{locking_free_EX_Tri_3}
\small
\centering
\begin{tabular}{|c c c c c c c c c|}
\hline
$h$&$\|\bm{u}-\bm{u}_0\|$&Rate&$\|\bm{u}-\bm{u}_b\|$&Rate&$\|\bm{u}-\bm{u}_0\|_{\infty}$&Rate&$\|\bm{u}-\bm{u}_b\|_{\infty}$&Rate\\
\hline
\multicolumn{9}{|c|}{$V^0_1(T)=[P_1(T)]^2$,~$R_b=Q_b$~and~$\lambda=1$}\\
\hline
1/8&2.55E-02&0.48 &1.07E-01&0.97 &2.74E-02&0.47 &4.34E-02&0.06 \\
1/16&2.17E-02&0.24 &6.11E-02&0.80 &2.09E-02&0.39 &4.35E-02&0.00 \\
1/32&2.03E-02&0.09 &4.34E-02&0.49 &1.75E-02&0.25 &4.35E-02&0.00 \\
1/64&1.99E-02&0.03 &3.78E-02&0.20 &1.58E-02&0.15 &4.35E-02&0.00 \\
\hline
\multicolumn{9}{|c|}{$V^0_2(T)=[P_1(T)]^2$,~$R_b=Q_b$~and~$\lambda=10^6$}\\
\hline
1/8&2.62E-02&0.46 &5.64E-02&0.78 &3.08E-02&0.41 &4.63E-02&-0.02 \\
1/16&2.22E-02&0.24 &4.16E-02&0.44 &2.48E-02&0.31 &4.72E-02&-0.03 \\
1/32&2.07E-02&0.10 &3.71E-02&0.16 &2.15E-02&0.20 &4.73E-02&0.00 \\
1/64&2.01E-02&0.04 &3.60E-02&0.05 &1.99E-02&0.12 &4.73E-02&0.00 \\
\hline
\multicolumn{9}{|c|}{$V^0_3(T)=[P_1(T)]^2$,~$R_b=I$~and~$\lambda=1$}\\
\hline
1/8&2.82E-02&0.97 &1.01E-01&1.03 &1.96E-02&0.75 &1.26E-02&0.57 \\
1/16&1.43E-02&0.98 &5.03E-02&1.01 &1.11E-02&0.83 &8.39E-03&0.59 \\
1/32&7.24E-03&0.98 &2.51E-02&1.00 &6.03E-03&0.88 &5.18E-03&0.70 \\
1/64&3.65E-03&0.99 &1.26E-02&1.00 &3.19E-03&0.92 &2.93E-03&0.82 \\
\hline
\multicolumn{9}{|c|}{$V^0_4(T)=[P_1(T)]^2$,~$R_b=I$~and~$\lambda=10^6$}\\
\hline
1/8&1.89E-02&0.88 &4.57E-02&1.01 &1.96E-02&0.74 &1.32E-02&0.49 \\
1/16&1.00E-02&0.91 &2.31E-02&0.98 &1.11E-02&0.82 &8.38E-03&0.66 \\
1/32&5.27E-03&0.93 &1.18E-02&0.97 &6.09E-03&0.87 &4.90E-03&0.77 \\
1/64&2.73E-03&0.95 &6.03E-03&0.97 &3.23E-03&0.91 &2.79E-03&0.81 \\
\hline
\multicolumn{9}{|c|}{$V^0_5(T)=\text{span}\{ [1;0],\,[0;1],\,\{\mbox{sin}~\bm{\sigma}(t_i)\}_{i=1}^{4} \}$,~$R_b=I$~and~$\lambda=1$}\\
\hline
1/8&2.82E-02&0.97 &1.01E-01&1.03 &1.96E-02&0.75 &1.26E-02&0.57 \\
1/16&1.43E-02&0.98 &5.03E-02&1.01 &1.11E-02&0.83 &8.40E-03&0.59 \\
1/32&7.24E-03&0.98 &2.51E-02&1.00 &6.03E-03&0.88 &5.18E-03&0.70 \\
1/64&3.65E-03&0.99 &1.26E-02&1.00 &3.19E-03&0.92 &2.93E-03&0.82 \\
\hline
\multicolumn{9}{|c|}{$V^0_6(T)=\text{span}\{ [1;0],\,[0;1],\,\{\mbox{sin}~\bm{\sigma}(t_i)\}_{i=1}^{4} \}$,~$R_b=I$~and~$\lambda=10^6$}\\
\hline
1/8&2.03E-02&1.01 &4.53E-02&1.02 &1.97E-02&0.74 &1.33E-02&0.45 \\
1/16&1.03E-02&0.98 &2.30E-02&0.98 &1.11E-02&0.83 &8.14E-03&0.71 \\
1/32&5.30E-03&0.95 &1.17E-02&0.97 &6.09E-03&0.86 &4.91E-03&0.73 \\
1/64&2.73E-03&0.95 &6.01E-03&0.97 &3.24E-03&0.91 &2.75E-03&0.83 \\
\hline
\multicolumn{9}{|c|}{$V^0_7(T)=\text{span}\{ [1;0],\,[0;1],\,\{\mbox{Sigmoid}~\bm{\sigma}(t_i)\}_{i=1}^{4} \}$,~$R_b=I$~and~$\lambda=1$}\\
\hline
1/8&2.82E-02&0.97 &1.01E-01&1.03 &1.96E-02&0.75 &1.26E-02&0.57 \\
1/16&1.43E-02&0.98 &5.03E-02&1.01 &1.11E-02&0.83 &8.39E-03&0.59 \\
1/32&7.24E-03&0.98 &2.51E-02&1.00 &6.03E-03&0.88 &5.18E-03&0.70 \\
1/64&3.65E-03&0.99 &1.26E-02&1.00 &3.19E-03&0.92 &2.93E-03&0.82 \\
\hline
\multicolumn{9}{|c|}{$V^0_8(T)=\text{span}\{ [1;0],\,[0;1],\,\{\mbox{Sigmoid}~\bm{\sigma}(t_i)\}_{i=1}^{4} \}$,~$R_b=I$~and~$\lambda=10^6$}\\
\hline
1/8&1.96E-02&0.97 &4.54E-02&1.02 &1.95E-02&0.75 &1.27E-02&0.56 \\
1/16&1.01E-02&0.95 &2.30E-02&0.98 &1.11E-02&0.81 &8.37E-03&0.60 \\
1/32&5.28E-03&0.94 &1.18E-02&0.97 &6.08E-03&0.87 &4.91E-03&0.77 \\
1/64&2.73E-03&0.95 &6.02E-03&0.97 &3.26E-03&0.90 &2.78E-03&0.82 \\
\hline
\end{tabular}
\end{table}

\section{Conclusions}
This paper introduced a generalized weak Galerkin (gWG) finite element method for linear elasticity problems on general polygonal and polyhedral meshes. The proposed new algorithm is based on a generalized weak differential operator that combines the classical differential operator with a local correction, distinguishing it from traditional weak Galerkin formulations. This construction enables the use of arbitrary finite-dimensional approximation spaces. We derived the associated error equations and established rigorous a priori error estimates.

Numerical experiments demonstrate the effectiveness of the method for both activation-function-based approximation spaces with randomly generated parameters and classical polynomial spaces on triangular and rectangular meshes. The results confirm that the proposed method is locking-free. In several test cases, activation-based spaces achieve convergence rates comparable to those of polynomial spaces, consistent with the theoretical predictions. These results highlight the flexibility, robustness, and broad applicability of the gWG framework for linear elasticity problems.

\bigskip
\noindent
\textbf{Acknowledgements}. The research of Junping Wang was supported by the NSF IR/D program while the author is working at the U.S. National Science Foundation. Any opinions, findings, and conclusions or recommendations expressed in this material are those of the author and do not necessarily reflect the views of the U.S. National Science Foundation.

\bibliographystyle{unsrt}
\bibliography{ref}

\end{document}